\newtheorem{theorem}{Theorem}[section]
\newtheorem{lemma}[theorem]{Lemma}
\newtheorem{prop}[theorem]{Proposition}
\newtheorem{definition}[theorem]{Definition}
\newtheorem{remark}[theorem]{Remark}
\newcommand{\RR}{\mathbb{R}}
\newcommand{\dz}{\dot{z}}
\newcommand{\twomatrix}[4]{\left[\begin{matrix}  #1 & #2\\#3  & #4 \end{matrix}\right]}
\newcommand{\pran}[1]{\left(#1\right)}
\newcommand{\vectorr}[2]{\begin{bmatrix}  #1 \\ #2 \end{bmatrix}}
\newcommand{\uP}{\bar{P}}
\newcommand{\lP}{\underbar{$P$}}
\begin{document}
\title{Limiting Behaviors of Nonconvex-Nonconcave Minimax Optimization via Continuous-Time Systems}
\author{Benjamin Grimmer\footnote{bdg79@cornell.edu; Cornell University, Ithaca NY}, Haihao Lu\footnote{Haihao.Lu@chicagobooth.edu; Google Research, New York NY and University of Chicago, Chicago IL}, Pratik Worah\footnote{pworah@google.com; Google Research, New York NY}, Vahab Mirrokni\footnote{mirrokni@google.com;Google Research, New York NY}}
\date{}
\maketitle

\begin{abstract}
  Unlike nonconvex optimization, where gradient descent is guaranteed to converge to a local optimizer, algorithms for nonconvex-nonconcave minimax optimization can have topologically different solution paths: sometimes converging to a solution, sometimes never converging and instead following a limit cycle, and sometimes diverging. In this paper, we study the limiting behaviors of three classic minimax algorithms: gradient descent ascent (GDA), alternating gradient descent ascent (AGDA), and the extragradient method (EGM). Numerically, we observe that all of these limiting behaviors can arise in Generative Adversarial Networks (GAN) training and are easily demonstrated for a range of GAN problems. To explain these different behaviors, we study the high-order resolution continuous-time dynamics that correspond to each algorithm, which results in the sufficient (and almost necessary) conditions for the local convergence by each method. Moreover, this ODE perspective allows us to characterize the phase transition between these different limiting behaviors caused by introducing regularization as Hopf Bifurcations. 
\end{abstract}

\section{Introduction}
In this paper, we are interested in the limiting behavior of optimizing nonconvex-nonconcave problems 
\begin{equation} \label{eq:minimax}
    \min_{x\in\RR^n}\max_{y\in\RR^m} L(x,y) ,
\end{equation}
for any differentiable objective function $L(x,y)$.
Minimax optimization has found wide usage in robust optimization~\citep{verdu1984minimax, ben2009robust, bertsimas2011theory} and many machine learning tasks. One notable application is in GAN training~\citep{Goodfellow1412} with a generator trying to produce fake data samples $G$ from a latent distribution and a discriminator $D$ trying to distinguish these from true data, defined by the following minimax problem
\begin{align} 
    \min_G \max_D\ &\mathbb{E}_{s\sim p_{data}} \left[\log D(s)\right] \nonumber\\
    & + \mathbb{E}_{e\sim p_{latent}} \left[\log(1-D(G(e)))\right]. \label{eq:GAN} 
\end{align}

We consider first-order methods for solving the minimax problem~\eqref{eq:minimax} given a gradient oracle $F(x,y)=(\nabla_x L(x,y), -\nabla_y L(x,y))$ or an unbiased estimator of these gradients. Given such an oracle, three of the most classic first-order minimax optimization methods can be defined as follows, producing a sequence of solution pairs $z_k=(x_k,y_k)$:\\
Gradient Descent Ascent (GDA)
\begin{equation}\label{eq:GDA}
    z_{k+1} = z_{k} - s F(z_{k}),
\end{equation}
Alternating Gradient Descent Ascent (AGDA)
\begin{align}\label{eq:AGDA}
    x_{k+1} &= x_{k} - s F_x(x_{k},y_{k}) \nonumber \\
    y_{k+1} &= y_{k} - s F_y(x_{k+1},y_{k}),
\end{align}
and the Extragradient Method (EGM)
\begin{align}\label{eq:EGM}
    z_{k+1/2} &= z_{k} - s F(z_{k}) \nonumber \\
    z_{k+1} &= z_{k} - s F(z_{k+1/2}).
\end{align}
Stochastic versions of these algorithms all follow by replacing $F(x,y)$ with an unbiased estimator. In the case of GAN training, an unbiased gradient estimate is given by using a finite batch of samples from $p_{data}$ and $p_{latent}$ to approximate the expectations in~\eqref{eq:GAN}.

\begin{figure*}\centering
    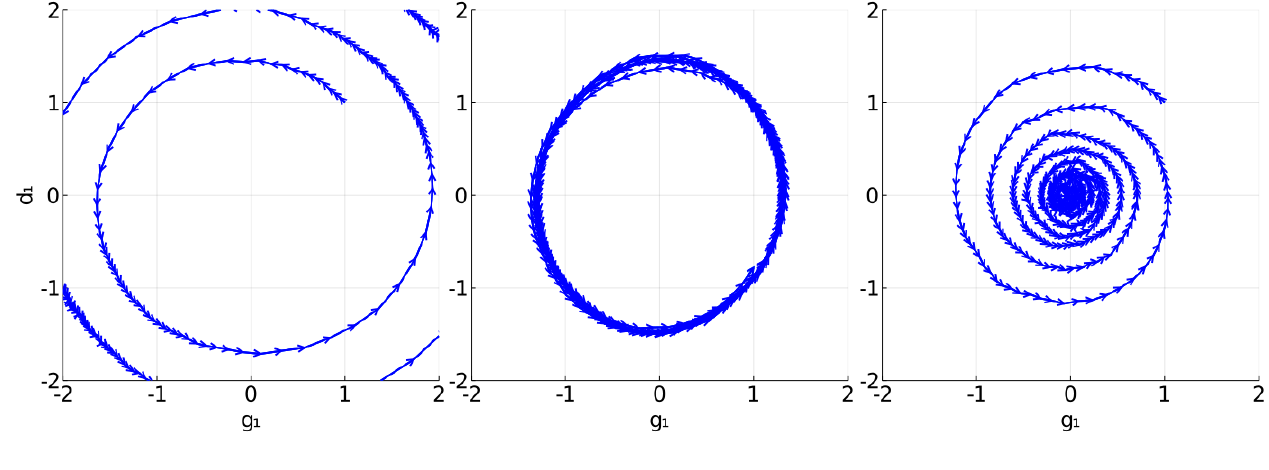
    \caption{Sample trajectories of GDA, AGDA, and EGM with batch size $10$ applied to a simple GAN~\eqref{eq:GAN} showing the first coordinates $(g_1,d_1)$ of $g$ and $d$ which diverge, cycle, and converge around a stationary point at the origin, respectively.}
    \label{fig:cifar}
\end{figure*}

We begin by considering the limiting behavior of these three methods on a GAN training example over CIFAR-10 data (with full experiment details given in Section~\ref{sec:GAN}). We take simple models for the discriminator controlling weights $d\in\RR^n$ in the logistic function
\begin{equation}\label{eq:discriminator}
    D(s) = \frac{1}{1+e^{d^T(s-\bar s)}}
\end{equation}
where $\bar s = \mathbb{E}_{s\sim p_{data}}[s]$ is the average image over the dataset and the generator controlling a translation of a normal latent distribution $g\in\RR^n$ giving $G(e)= g+e$.
Figure~\ref{fig:cifar} shows that even in this simplified setting the trajectories of GDA, AGDA, and EGM vary greatly, diverging, cycling, and converging respectively.

There has been a recent surge in using continuous time ODE models to understand the behavior of iterative optimization methods, initiated by~\citet{Su1601}.
A typical focus is on the ODE arising from taking the stepsize $s\rightarrow 0$ to zero. However, under this limit, all three of these methods have their solution paths converge to the {\it gradient flow} (GF) given by the ODE
\begin{equation}\label{eq:ODE-GF}
    \dot z = -F(z).
\end{equation}
To understand the differences in limiting behavior we have observed between GDA, AGDA, and EGM, we need to consider the $O(s)$-resolution ODEs proposed by~\citet{Shi1810} and~\citet{Lu2020}, which capture differences between these methods when the stepsize $s$ is nonzero.

\subsection{Our Contributions}
The main contributions of this work is understanding and characterizing different possible (potentially nonconvergent) limiting behaviors and limit points of minimax algorithms, which enables us to explain the differing trajectories observed in our one-layer GAN example.

\begin{enumerate}
\item {\bf Divergence, cycling and limit points.}
We derive necessary and sufficient conditions for stationary points to be attractive for each of GDA, AGDA, and EGM of the generic minimax problem~\eqref{eq:minimax}. These conditions apply broadly to any sufficiently differentiable nonconvex-nonconcave minimax problems and explain the differences in convergence and divergence found in our GAN experiments. Our conditions are based on understanding the underlying ODEs, but apply directly to each discrete-time algorithms.

\item {\bf Regularization induced phase transitions.} Adding regularization into our our one-layer CIFAR example eventually leads GDA and AGDA to converge. 
Figure~\ref{fig:GDA-regularization} shows GDA transitions from divergence to having an attractive limit cycle, which then shrinks eventually collapsing into an attractive stationary point.\\
For a broad class of GAN training problems, we show this transition from a nonconvergent trajectory to convergence will be a Hopf bifurcation. We present concrete two-dimensional GAN and polynomial minimax examples demonstrating Hopf bifurcations occurring.
\end{enumerate}

\subsection{Related Works}

\paragraph{Divergence and cycling.}
GDA is well known to diverge even for convex-concave problems while EGM still converges, the simplest example being $\min_{x\in\RR}\max_{y\in\RR} xy$. Cycling behaviors arising in nonconvex-nonconcave problems have been observed in a variety of different settings~\citep{Letcher2005,Hsieh2006,Grimmer2006}. We differ from these works in that our focus is on developing tools for characterizing attractive limit points and the transitions between different limiting behaviors for a wide range of minimax algorithms.

\paragraph{\bf GAN equilibria.}
\citet{Arora1710} showed under moderate size requirements, GAN training will have an approximately pure equilibrium but that equilibrium point may be far from target true data distribution.
These results however do not guarantee that an equilibrium point will be found, leaving the potential for cycling and divergence as seen in Figure~\ref{fig:cifar}.

\paragraph{Nonconvex-nonconcave limit points.}
Quantifying limit points as local Nash equilibrium with measures like stationarity $\nabla L(x,y)\approx 0$ (which is the first-order necessary condition for a Nash equilibrium) has been done for a variety of different first-order methods~\citep{Cherukuri1611,Constantinos1812,adolphs1901,Mazumdar2002,liang2019interaction}. We follow in this vein as the limit points of GDA and AGDA necessarily have $\nabla L(x,y)=0$. 

Alternative measures of local optimality are discussed in~\citet{Jordan2008}, where a notion of local minimax points is presented capturing the sequential nature of minimax problems and relating to GDA's limit points.

\paragraph{Stability of Limit Points}
Recently~\citep{Lu2020} presented an ODE approach to analyzing a broad class of discrete-time algorithms. Their analysis is non-trivial and requires high-order smoothness (five times differentiability). Our result differ from theirs as they require strong global conditions, whereas we just need similar conditions hold at the stationary point $z^*$.
As an alternative to this ODE approach, \citep{Zhang2020} study discrete-time algorithms directly, and present conditions of stability that involves the complex eigen-values of the non-symmetric Jacobian matrix, which cannot be easily verified even for simple problems. In contrast, our approach leads to much more transparent conditions on a p.s.d. matrix which facilitates our study on the role of the stepsize and  derivatives in convergence, as well as the usage of insights from ODE and Bifurcation theory. For example, our result clearly shows that interaction terms help the convergence of EGM while hurt GDA.

\paragraph{Nonconvex-nonconcave convergence rates.}
Beyond characterizing the limit points, there has also been a recent push to establish finite-time convergence guarantees for nonconvex-nonconcave minimax problems. Many of these approaches rely on forms of convex-concave-like assumptions, such as Minty's Variational Inequality~\citep{Lin1809} and Polyak-Lojasiewicz conditions~\citep{Nouiehed1902, Yang2002}.
Convergence guarantees for the proximal point method on quite general nonconvex-nonconcave problems are given in~\citep{Grimmer2006} when the interaction between the minimizing and maximizing agents is sufficiently strong or sufficiently weak. The limitation to these convex-concave like assumptions or focusing on the proximal point method however prevents these results from describing the phenomena observed herein for GANs.


\section{GAN Divergence, Cycling and Phase Transitions} \label{sec:GAN}
As briefly described in the introduction, we begin by surveying the types of solution paths that arise numerically from GAN training~\eqref{eq:GAN}.
We fix the true data distribution $p_{data}$ as being uniformly over the set of $50,000$ training images in the CIFAR-10 dataset, each represented as vectors of length $n=32\times32\times3$ and the latent distribution $p_{latent}$ as a standard normal.

We find that considering one-layer networks already suffices to encounter a wide range of different solution path geometries when solving~\eqref{eq:GAN}. 
We consider a discriminator controlling weights $d\in\RR^n$ in the logistic~\eqref{eq:discriminator} and a generator controlling a translation $g\in\RR^n$ giving $G(e)= g+e$.

For this setup, we find that the origin $(g,d)=(0,0)$ has $F(0,0)\approx (0,0)$ and so it is an approximate stationary point for all three of GDA, AGDA, and EGM. However, the solution paths of these methods vary widely when initialized near the origin at $g=(1,0,...,0)$ and $d=(1,0,...,0)$ with fixed $s=0.2$, not all converging to the origin. Sample solution paths using batches of $10$ samples to approximate the gradients are shown in Figure~\ref{fig:cifar}, plotting the first coordinate of $g$ and $d$ which had the only nonzero initializations. The appendix gives more sample trajectories using other batch sizes, showing the same general dynamics hold but either more or less noisy.

This experiment shows three different limiting behaviors arising: GDA diverges spiraling outward from the origin, AGDA falls into a stable cycle around the origin, and EGM converges into a stationary point around the origin. In Section~\ref{sec:limits}, we develop theory based on related $O(s)$-resolution ODE models explaining these observed differences in convergence and divergence.

Since GDA and AGDA failed to converge, a typical recourse is to add regularization to the objective to deter this behavior. Using L2 penalization, we have the modified training problem objective
\begin{align} \label{eq:regularized}
    \mathbb{E}_{s\sim p_{data}} \left[\log D(s)\right] &+ \mathbb{E}_{e\sim p_{latent}} \left[\log(1-D(G(e)))\right]\nonumber \\
   & + \frac{\alpha}{2}\|g\|^2 - \frac{\alpha}{2}\|d\|^2, 
\end{align} 
for any level of regularization $\alpha\geq 0$. Adding mild amounts of regularization $\alpha<1.0$ suffices to cause the nonconvergent trajectories of GDA and AGDA to transition into being convergent paths.

\begin{figure*}\centering
    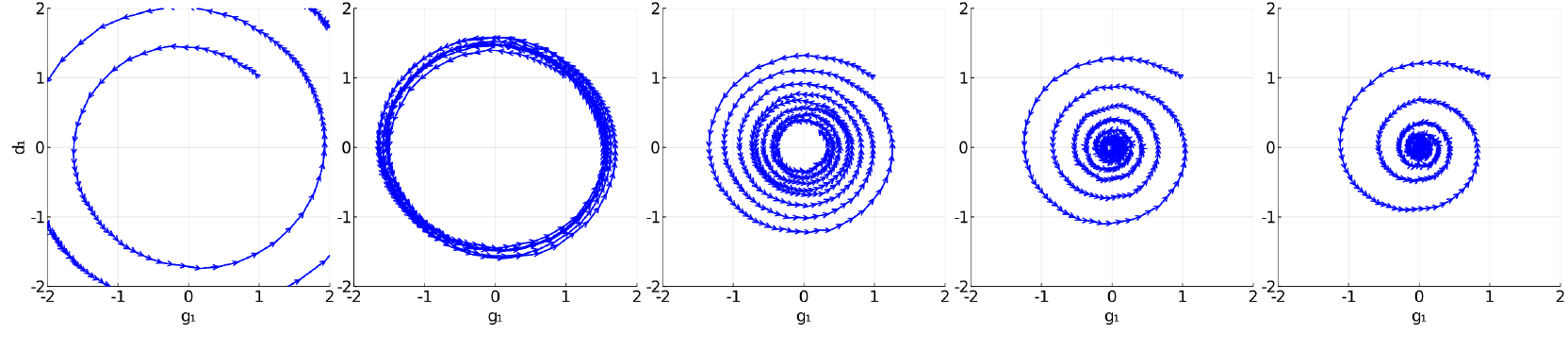
    \caption{GDA trajectory on a simple GAN with increasing quadratic regularization, plotting the first coordinates $(g_1,d_1)$ at each iteration.}
    \label{fig:GDA-regularization}
\end{figure*}

Figure~\ref{fig:GDA-regularization} shows the trajectory of GDA~\eqref{eq:AGDA} as the regularization parameter increases. We see that as $\alpha$ grows larger, the stable limit cycle shrinks until it collapses to a point for some critical value of $\alpha$ between $\alpha=0.4$ and $0.6$. For all $\alpha$ larger than this critical point, the trajectory of GDA has transitioned from nonconvergent cycling to convergence. 


In Section~\ref{sec:hopf}, we explain this phase transitions from having a repulsive stationary point at the origin to an attractive one as a bifurcation occurring in the related $O(s)$-resolution ODE for each algorithm.

\section{The Attractive Limit Points of the Four Dynamics} \label{sec:limits}
Our simple GAN experiments show the trajectories of GDA, AGDA, and EGM can be topologically different. Here we explain such differences by studying the $O(s)$-resolution ODEs for the three algorithms (which are formally defined in Section~\ref{sec:ODEs}) as well as the simple gradient flow dynamics. From these, we arrive at our main theoretical result Theorem~\ref{thm:general_sufficient_condition} in Section~\ref{sec:local_ODE}, giving necessary and sufficient conditions for a stationary point to be attractive for these dynamics. Further, Section~\ref{sec:discrete} shows that for reasonably small $s$, these conditions guarantee the discrete-time iterations have a linear attractor whenever their associated ODE does.

For any positive definite $P\succ 0$, let $\|z\|^2_P=z^TPz$. Our theory applies to an arbitrary stationary point $z^*$ where we denote the second order derivatives of the objective by $A=\nabla^2_{xx} L(z^*), B=\nabla^2_{xy} L(z^*), C=-\nabla^2_{yy} L(z^*)$. 

\subsection{$O(s)$-Resolution ODE Systems for Discrete-time Algorithms} \label{sec:ODEs}
The traditional way to obtain an ODE for a discrete-time algorithm is to let the stepsize $s$ go to $0$. While that may be the easiest and the most natural approach, we never use $s=0$ in practice, and even worse, the solution trajectory of the resulting ODE and the discrete-time algorithm with any positive stepsize can be topologically different. This is exemplified by the trajectories of GDA, AGDA, and EGM seen in Figure \ref{fig:GDA-regularization} which are topologically different, despite sharing the same ODE when letting the stepsize go to $0$, namely gradient flow. In order to overcome such limitations, high-order resolution ODEs~\citep{Shi1810,Lu2020} have been proposed recently to study the subtle differences between similar algorithms. We here utilize the framework proposed in~\citet{Lu2020} to study the $O(s)$-resolution ODE for these three algorithms, which, as we will show later, is capable to explain the different behaviors of the algorithms.

More formally, we say an ODE 
\begin{equation}\label{eq:osODE}
\dz=f_0(z)+s f_1(z)
\end{equation}
the $O(s)$-resolution ODE of a discrete-time algorithm with iterate update $z^{+}=g(z,s)$ if it satisfies that for any $z$ and $z^+=g(z,s)$,
\begin{equation}\label{eq:small_term}
    \|z(s)-z^+\| = o(s^{2}) \ .
\end{equation}
For stochastic gradients, this can be extended by considering the fluid limit as $z^+=\mathbb{E}\  g(z,s)$ given by using large enough batches.
It turns out such an ODE \eqref{eq:osODE} is unique for a smooth enough discrete-time algorithm (see Theorem 1 in~\citet{Lu2020}) and we herein study the $O(s)$-resolution ODE of GDA, AGDA and EGM:

\begin{prop}\label{prop:ODEs}
    1. \textbf{GDA:} The $O(s)$-resolution ODE of GDA is
    \begin{equation}\label{eq:ODE-GDA}
        \dot z = -F(z) - \frac{s}{2}\nabla F(z) F(z).
    \end{equation}
    2. \textbf{EGM:} The $O(s)$-resolution ODE of EGM is
    \begin{equation}\label{eq:ODE-PPM}
        \dot z = -F(z) + \frac{s}{2}\nabla F(z) F(z).
    \end{equation}
    3. \textbf{AGDA:} The $O(s)$-resolution ODE of AGDA is
    \begin{equation}\label{eq:ODE-AGDA}
        \dot z = -F(z) - \frac{s}{2}\twomatrix{A}{B^T}{B}{C} F(z) \ ,
    \end{equation}
\end{prop}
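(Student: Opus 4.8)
The plan is to compute, for each of the three algorithms, the one-step update map $z^+ = g(z,s)$ as a Taylor expansion in $s$ up to and including the $O(s^2)$ term, then match it against the Taylor expansion of the solution $z(s)$ of a candidate ODE $\dot z = f_0(z) + s f_1(z)$ through the same order, and finally invoke the uniqueness statement (Theorem 1 of~\citet{Lu2020}) to conclude that the $O(s)$-resolution ODE is exactly the one claimed. So the first step is to write out the Taylor expansion of the ODE solution: if $\dot z = f_0(z) + s f_1(z)$ with $z(0) = z$, then $z(s) = z + s f_0(z) + s^2\big(f_1(z) + \tfrac12 \nabla f_0(z) f_0(z)\big) + o(s^2)$, using $\ddt\dot z = (\nabla f_0 + s\nabla f_1)(f_0 + s f_1)$ evaluated at $t=0$. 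For all four dynamics $f_0(z) = -F(z)$, so the $s^1$ term is automatically matched, and the content is entirely in matching the $s^2$ coefficient: we need $f_1(z) + \tfrac12 \nabla F(z) F(z)$ to equal the $s^2$-coefficient of $z^+$.

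Next I would expand each discrete update. For GDA, $z^+ = z - sF(z)$ is exact with no $s^2$ term, so we need $f_1(z) + \tfrac12\nabla F(z)F(z) = 0$, i.e. $f_1 = -\tfrac12 \nabla F(z) F(z)$, giving~\eqref{eq:ODE-GDA}. For EGM, $z^+ = z - sF(z - sF(z)) = z - sF(z) + s^2 \nabla F(z) F(z) + o(s^2)$ by a first-order Taylor expansion of $F$ about $z$; matching gives $f_1(z) + \tfrac12\nabla F(z)F(z) = \nabla F(z) F(z)$, hence $f_1 = +\tfrac12\nabla F(z)F(z)$, yielding~\eqref{eq:ODE-PPM}. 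For AGDA the bookkeeping is slightly more involved because $x$ is updated before $y$: writing $F = (F_x, F_y)$, we have $x^+ = x - sF_x(x,y)$ and $y^+ = y - sF_y(x^+, y) = y - sF_y(x,y) + s^2 \nabla_x F_y(x,y) F_x(x,y) + o(s^2)$. Stacking, $z^+ = z - sF(z) + s^2\, \vectorr{0}{\nabla_x F_y \, F_x} + o(s^2)$. Now I would observe that the extra term can be written as $\tfrac12\left(\nabla F(z) + J\right)F(z)$ for an appropriate matrix, or more directly identify $\vectorr{0}{\nabla_x F_y F_x}$: since $\nabla_x F_y = \nabla^2_{xy} L = B^T$ at $z^*$ (and more generally the corresponding block of the Jacobian), and since $F_x$ is the first block of $F$, the extra term equals the second block of $\twomatrix{0}{0}{B}{0} F(z)$. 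Matching $f_1(z) + \tfrac12\nabla F(z)F(z) = \twomatrix{0}{0}{B}{0}F(z)$ and using that $\nabla F(z) = \twomatrix{A}{B^T}{-B}{\text{(block)}}$ has the structure making $\twomatrix{0}{0}{B}{0} - \tfrac12\nabla F(z)$ coincide with $-\tfrac12\twomatrix{A}{B^T}{B}{C}$ at the stationary point gives~\eqref{eq:ODE-AGDA}. I should be careful here that the statement is about the ODE at a general $z$ versus the simplified form at $z^*$ with $A,B,C$ notation — the cleanest route is to carry out the matching with the full Jacobian $\nabla F(z)$ and then specialize, or to note the proposition as stated is implicitly using the $z^*$-localized second-derivative notation introduced just before Section~\ref{sec:ODEs}.

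The main obstacle I expect is the AGDA computation: keeping track of which argument each partial gradient is evaluated at through the staggered update, and then recognizing that the resulting correction term $\vectorr{0}{\nabla_x F_y\, F_x}$ can be re-expressed in the symmetric-looking form $-\tfrac{s}{2}\twomatrix{A}{B^T}{B}{C} F(z)$ once combined with the $-\tfrac12 \nabla F(z) F(z)$ coming from the ODE-solution expansion. Concretely, the Jacobian of $F$ at $z^*$ is $\nabla F(z^*) = \twomatrix{A}{B^T}{-B}{-C}$ (note the asymmetry from the ascent sign), so $\twomatrix{0}{0}{B}{0}F - \tfrac12 \twomatrix{A}{B^T}{-B}{-C}F = -\tfrac12\twomatrix{A}{B^T}{-B - 2B \cdot(-1)}{-C}$... — this needs the sign of the off-diagonal blocks chased carefully, and getting it right is precisely what turns the asymmetric Jacobian into the symmetric matrix appearing in~\eqref{eq:ODE-AGDA}. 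Everything else (GDA, EGM, and the generic ODE-solution expansion) is routine Taylor expansion, and the uniqueness of the $O(s)$-resolution ODE from~\citet{Lu2020} closes the argument once the $s^2$ coefficients are matched.
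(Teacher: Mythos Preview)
Your approach is correct and essentially identical to the paper's: Taylor-expand the one-step update to order $s^2$, Taylor-expand the ODE solution to the same order, match the $s^2$ coefficients to read off $f_1$, and invoke the uniqueness result (Theorem~1 of \citet{Lu2020}). The paper simply cites Corollary~1 of \citet{Lu2020} for the GDA and EGM cases rather than redoing the short computation you give; for AGDA it does exactly what you propose, writing $g_2(z)=\vectorr{0}{\nabla_x F_y(z)F_x(z)}$ and then $f_1=g_2-\tfrac12\nabla f_0\,f_0$.

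One small correction to clean up in your AGDA block-matrix algebra: recall $C=-\nabla^2_{yy}L(z^*)$ already absorbs a sign, so the bottom-right block of $\nabla F(z^*)$ is $C$, not $-C$; and $F_y=-\nabla_y L$ means $\nabla_x F_y$ carries a minus sign relative to the mixed Hessian. With those two signs fixed, the combination $g_2-\tfrac12\nabla F\,F$ collapses directly to $-\tfrac12\twomatrix{A}{B^T}{B}{C}F$ as claimed.
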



Letting the stepsize go to zero, all three of these ODEs reduce to the previously stated gradient flow ODE~\eqref{eq:ODE-GF}.




\subsection{Local Convergence of the $O(s)$-Resolution ODE}\label{sec:local_ODE}
Armed with these more refined ODEs for GDA, AGDA, and EGM, we can formalize our necessary and sufficient conditions for a limit point to be attractive. For a generic ODE
\begin{equation} \label{eq:general_ODE}
\dz = G(z) \ ,    
\end{equation}
we need conditions for when a fixed point $z^*$ to the ODE~\eqref{eq:general_ODE} is attractive, which will translate into our required conditions for the convergence of specific algorithms. The traditional stability theory of ODE says that the attractivity of a fixed point relies on the whether the real part of the every eigenvalue of $\nabla G(z^*)$ is negative, but unfortunately it is not transparent to directly evaluate the eigenvalue structure of $\nabla G(z^*)$ for minimax problems. Instead, in order to take advantage of the structure of minimax problems, we here study whether $\|z(t)-z^*\|_P$ goes to $0$ for a positive definite norm matrix $P$. The natural norm $P$ to use differs for different algorithms.



\begin{definition}
We say a fixed point $z^*$ 
is a linear attractor to an ODE system \eqref{eq:general_ODE} in the $P$ norm if there exists $\delta, \rho>0$ such that for any initial solution $z(0)\in B(z^*, \delta)$, it holds that $\|z(t)-z^*\|_P\le e^{-\rho t}\|z(0)-z^*\|_P$.\footnote{In ODE literature, this convergence rate sometimes is called exponential convergence. Here we choose to call it linear in order to be consistent with the optimization literature.}
\end{definition}
Then the linear attractors of our ODEs of interest are characterized as follows.
\begin{theorem}\label{thm:main}
1. \textbf{GF:} Suppose $z^*$ is a stationary point to GF \eqref{eq:ODE-GF}. Then $z^*$ is a linear attractor to GF \eqref{eq:ODE-GF} in Euclidean norm (that is, $P=I$) if it holds that 
\begin{equation}\label{eq:sufficient_condition_GF}
    A \succ 0\ , C \prec 0 \ 
\end{equation}
and not if either inequality is strictly violated.\\
2. \textbf{GDA:} Suppose $z^*$ is a stationary point to the $O(s)$-resolution ODE to GDA \eqref{eq:ODE-GDA}. Then $z^*$ is a linear attractor to \eqref{eq:ODE-GDA} in Euclidean norm if 
\begin{equation}\label{eq:sufficient_condition_GDA}
    A+\frac{s}{2}(A^2-BB^T) \succ 0\ , C+\frac{s}{2}(C^2-B^TB) \succ 0 \ 
\end{equation}
and not if either inequality is strictly violated.\\
3. \textbf{EGM:} Suppose $z^*$ is a stationary point to the $O(s)$-resolution ODE to EGM \eqref{eq:ODE-PPM}. Then $z^*$ is a linear attractor to \eqref{eq:ODE-PPM} in Euclidean norm if 
\begin{equation}\label{eq:sufficient_condition_PPM}
    A-\frac{s}{2}(A^2-BB^T) \succ 0\ , C-\frac{s}{2}(C^2-B^TB) \succ 0 \ 
\end{equation}
and not if either inequality is strictly violated.\\
4. \textbf{AGDA:} Suppose $z^*$ is a stationary point to the $O(s)$-resolution ODE to AGDA \eqref{eq:ODE-AGDA}. Then $z^*$ is a linear attractor to \eqref{eq:ODE-AGDA} in scaled Euclidean norm with $P=\twomatrix{I}{\frac{1}{2} sB^T}{\frac{1}{2} sB}{I}$ if
\begin{equation} \label{eq:sufficient_condition_AGDA} 
    M\succ 0
\end{equation}
where $M$ is defined as
$$ M_{xx}=-A-\frac{s}{2}A^2 - \frac{s^2}{4}\pran{A B^T B + B^T B A},$$
$$ M_{yy}=-C-\frac{s}{2}C^2 - \frac{s^2}{4}\pran{C B B^T + B B^T C},$$
$$ M_{xy}=-\frac{s}{2}\pran{BA+CB} - \frac{s^2}{4}\pran{BA^2 + C^2 B},$$
and not if the inequality is strictly violated.
\end{theorem}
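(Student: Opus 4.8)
The plan is to reduce the attractivity question for each ODE to a linear‑stability question at $z^*$, and then convert that linear‑stability question, via a Lyapunov‑function argument in the appropriate $P$ norm, into the stated semidefiniteness conditions on $A,B,C$. First I would linearize each ODE $\dz=G(z)$ about the fixed point $z^*$: writing $w=z-z^*$, we get $\dot w = \nabla G(z^*)w + o(\|w\|)$, so by the standard Hartman–Grobman / Lyapunov linearization theory a fixed point is a linear attractor in the $P$ norm precisely when $\nabla G(z^*)$ is \emph{Hurwitz}, and moreover — this is the key reformulation — $\|z(t)-z^*\|_P \le e^{-\rho t}\|z(0)-z^*\|_P$ holds locally for some $\rho>0$ iff $P\nabla G(z^*) + \nabla G(z^*)^T P \prec 0$. (The ``not if strictly violated'' half comes from the converse: if some eigenvalue of $\nabla G(z^*)$ has strictly positive real part, there is an unstable manifold and no $P$ norm can contract.) So the whole theorem becomes: compute $\nabla G(z^*)$ for each of the four ODEs, pick the right $P$, and show $P\nabla G(z^*)+\nabla G(z^*)^TP\prec 0 \iff$ the stated condition.

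Next I would carry out the four linearizations explicitly using Proposition~\ref{prop:ODEs} and the notation $A=\nxxL$, $B=\nxyL$, $C=-\nyyL$. Since $F(z^*)=0$, differentiating $-F(z)-\tfrac{s}{2}\nabla F(z)F(z)$ at $z^*$ kills the term where the derivative hits the trailing $F(z)$ factor, leaving $-\nabla F(z^*) - \tfrac{s}{2}\nabla F(z^*)^2$ for GDA, $-\nabla F(z^*)+\tfrac{s}{2}\nabla F(z^*)^2$ for EGM, and $-\nabla F(z^*) - \tfrac{s}{2}\,\mathrm{diag\text{-}block}(A,-C)\,\nabla F(z^*)$ for AGDA (using $\twomatrix{A}{B^T}{B}{C}$ evaluated at $z^*$), where
$$\nabla F(z^*)=\twomatrix{A}{B}{-B^T}{C'}, \qquad C'=\nyyL=-C.$$
For GF with $P=I$, the condition $\nabla G(z^*)+\nabla G(z^*)^T\prec 0$ is literally $-\twomatrix{2A}{0}{0}{2C'}=\twomatrix{-2A}{0}{0}{2C}\prec 0$, i.e. $A\succ0,\ C\prec0$ — wait, note $C'=-C$, so this is $A\succ0$ and $C'\prec 0$, i.e. $-C\prec 0$, i.e.\ $C\succ0$; reconciling signs with the statement $C\prec0$ amounts to tracking the paper's convention $C=-\nyyL$ carefully, which I would do once at the outset. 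For GDA and EGM, $\nabla F(z^*)^2$ has diagonal blocks $A^2-BB^T$ and $C'^2-B^TB$ and off‑diagonal blocks that are \emph{anti}symmetric in the right basis, so when I symmetrize $-\nabla F - \tfrac{s}{2}\nabla F^2$ the cross terms from $\nabla F$ and from the antisymmetric part of $\nabla F^2$ all cancel, leaving a block‑diagonal matrix whose blocks are exactly $-(A+\tfrac s2(A^2-BB^T))$ and $-(C+\tfrac s2(C^2-B^TB))$ (and with a sign flip for EGM); this yields \eqref{eq:sufficient_condition_GDA} and \eqref{eq:sufficient_condition_PPM} directly. For AGDA the same computation with the non‑symmetric preconditioner produces a $\nabla G(z^*)$ that is \emph{not} nicely symmetrizable under $P=I$, which is why the theorem switches to $P=\twomatrix{I}{\frac12 sB^T}{\frac12 sB}{I}$; there one computes $P\nabla G(z^*)+\nabla G(z^*)^TP$ and checks that it equals $-2M$ (up to the $o(s^2)$ truncation consistent with the $O(s)$‑resolution framework), with $M_{xx},M_{yy},M_{xy}$ as listed.

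The main obstacle is the AGDA case: one must (a) verify that $P=\twomatrix{I}{\frac12 sB^T}{\frac12 sB}{I}$ is actually positive definite for $s$ small enough (true since it is $I+O(s)$), (b) push through the block matrix multiplication $P\nabla G(z^*)+\nabla G(z^*)^TP$ without dropping or mis-signing any of the several $O(s)$ and $O(s^2)$ terms, and (c) argue that the $O(s^2)$ terms that appear in $M$ are exactly the ones consistent with the $O(s)$-resolution ODE and that higher-order terms genuinely do not affect the local stability conclusion (they do not, since a Hurwitz linear part is robust to $o(s^2)$ perturbations of the Jacobian entries for fixed small $s$, and conversely a strictly violated inequality gives an eigenvalue with strictly positive real part that survives the perturbation). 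A secondary, more technical point common to all four cases is justifying the equivalence ``$\nabla G(z^*)$ Hurwitz $\iff$ $\exists P\succ0$ with $P\nabla G+\nabla G^TP\prec0$'' in the \emph{specific} $P$ claimed — for GF/GDA/EGM the claimed $P=I$ works directly because symmetrization already block-diagonalizes, so there the two conditions coincide on the nose; for AGDA one invokes the Lyapunov equation theory to know \emph{some} $P$ works and then verifies the explicitly proposed $P$ is a valid choice. I would present the GF case in full as a template, then note GDA and EGM follow by the same symmetrization with the sign of the $s$ term flipped, and devote the bulk of the write-up to the AGDA block computation.
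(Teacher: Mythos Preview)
Your computational plan matches the paper's: establish that being a linear attractor in the $P$ norm is equivalent (modulo the boundary case) to $\tfrac12\bigl(P\nabla G(z^*)+\nabla G(z^*)^TP\bigr)\prec 0$ (the paper isolates this as Propositions~\ref{thm:general_sufficient_condition} and~\ref{prop:general_necessary_condition}), compute $\nabla G(z^*)$ for each ODE using $F(z^*)=0$ so that the $\nabla^2 F\cdot F$ terms vanish, and symmetrize block by block.

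There is, however, a genuine conceptual slip in your handling of the ``not if'' direction. You equate ``linear attractor in the $P$ norm'' with ``$\nabla G(z^*)$ is Hurwitz'' and propose to deduce the negative half from an unstable-manifold argument. These are not the same: the paper's definition demands $\|z(t)-z^*\|_P\le e^{-\rho t}\|z(0)-z^*\|_P$ for all $t\ge 0$ and all nearby $z(0)$, which is monotone $P$-norm contraction and is exactly the symmetrized condition --- a strictly stronger requirement than Hurwitz for a \emph{fixed} $P$. A strict violation of the block conditions means the symmetrized matrix has a positive eigenvalue, but $\nabla G(z^*)$ may still be Hurwitz (e.g.\ $\nabla G=\left[\begin{smallmatrix}-1&10\\0&-1\end{smallmatrix}\right]$), so there need be no unstable manifold to invoke. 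The paper's argument (Proposition~\ref{prop:general_necessary_condition}) is instead direct: initialize along the positive eigenvector of the symmetrized matrix and show $\tfrac{d}{dt}\|z-z^*\|_P^2>0$ at $t=0$, contradicting monotone decay. Two smaller corrections: the bottom-right block of $\nabla F(z^*)$ is $C=-\nabla_{yy}L(z^*)$, not your $C'=-C$ (since $\nabla_y F_y=-\nabla_{yy}L=C$), so the sign puzzle you flag evaporates (the $C\prec 0$ in the statement is itself a typo for $C\succ 0$); and for AGDA there is no $o(s^2)$ truncation, since both $P$ and $\nabla G(z^*)$ are affine in $s$, making $M=\tfrac12(P\nabla G+\nabla G^TP)$ exactly quadratic in $s$.
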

Similar sufficient conditions for GDA and EGM were presented in \citet{Lu2020}. Theorem \ref{thm:main} improves upon those by only requiring the positive definiteness conditions to hold only at $z^*$, whereas \citet{Lu2020} requires similar positive definiteness conditions to hold globally. This difference comes from the analysis utilizing different energy functions.

To prove this theorem, we first establish some basic properties of linear attractors.
Consider a fixed point $z^*$ to the generic ODE \eqref{eq:general_ODE}. Suppose the ODE is locally Lipschitz continuous around $z^*$, i.e.,  there exists $H>0$ such that
\begin{equation}\label{eq:smoothness_ode}
    \|\nabla G(z)\| \le H \text{ for any } z\in \{z|\|z-z^*\|\le 1\} \ .
\end{equation}
The following proposition presents a sufficient condition for whether a fixed point is a linear attractor:
\begin{prop}\label{thm:general_sufficient_condition}
    Consider a stationary point $z^*$ to the dynamic~\eqref{eq:general_ODE}, namely $G(z^*)=0$. For any positive definite matrice $P\succ 0$, if
    \begin{equation}\label{eq:sufficient_condi_general}
    \frac{1}{2}\pran{\nabla G(z^*)^T P + P\nabla G(z^*)} \prec 0 \ , 
    \end{equation}
    then $z^*$ is a linear attractor to the dynamic $\dot z = G(z)$ in the $P$ norm.
\end{prop}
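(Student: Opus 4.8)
The plan is to exhibit an explicit Lyapunov function, namely the squared $P$-norm $V(z) = \|z - z^*\|_P^2 = (z-z^*)^T P (z-z^*)$, and show it decays exponentially along trajectories near $z^*$. First I would compute $\frac{d}{dt} V(z(t)) = 2(z-z^*)^T P \dot z = 2(z-z^*)^T P\, G(z)$. Since $G(z^*) = 0$, I would write $G(z) = \nabla G(z^*)(z - z^*) + R(z)$ where $R(z)$ is the Taylor remainder satisfying $\|R(z)\| = o(\|z - z^*\|)$; more precisely, the local Lipschitz bound \eqref{eq:smoothness_ode} lets me control $\|R(z)\| \le \omega(\|z-z^*\|)\|z-z^*\|$ for some modulus $\omega$ with $\omega(r) \to 0$ as $r \to 0$. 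Substituting gives
\begin{equation*}
\ddt V(z(t)) = (z-z^*)^T\pran{\nabla G(z^*)^T P + P\nabla G(z^*)}(z-z^*) + 2(z-z^*)^T P R(z) \ .
\end{equation*}

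Next I would use the hypothesis \eqref{eq:sufficient_condi_general}: the symmetric matrix $S := \frac12(\nabla G(z^*)^T P + P \nabla G(z^*))$ is negative definite, so there exists $\mu > 0$ with $S \preceq -\mu I$, hence the first term is bounded above by $-2\mu \|z-z^*\|^2$. The cross term is bounded in absolute value by $2\|P\|\,\omega(\|z-z^*\|)\,\|z-z^*\|^2$. Choosing $\delta > 0$ small enough that $\|P\|\,\omega(r) \le \mu/2$ for all $r \le \delta$, I get $\ddt V \le -\mu \|z - z^*\|^2$ on $B(z^*,\delta)$. Converting back to the $P$-norm via $\lambda_{\min}(P)\|z-z^*\|^2 \le V(z) \le \lambda_{\max}(P)\|z-z^*\|^2$ yields $\ddt V \le -\frac{\mu}{\lambda_{\max}(P)} V$, so by Grönwall $V(z(t)) \le e^{-\rho t} V(z(0))$ with $\rho = \mu/\lambda_{\max}(P)$. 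Taking square roots gives $\|z(t) - z^*\|_P \le e^{-\rho t/2}\|z(0)-z^*\|_P$, and one also must check the trajectory stays in $B(z^*,\delta)$ for all $t \ge 0$ — but this is automatic since $V$ is nonincreasing, so $z(t)$ never leaves the sublevel set containing $z(0)$, which sits inside $B(z^*,\delta)$ (shrinking $\delta$ by the condition number of $P$ if needed to make the $P$-norm ball fit). Relabeling $\rho/2$ as $\rho$ matches the definition of linear attractor.

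The main obstacle is bookkeeping the interplay between the Euclidean norm (in which the Taylor estimate and the eigenvalue bound on $S$ are naturally stated) and the $P$-norm (in which the conclusion is phrased), and ensuring the forward-invariance of the neighborhood so that the local Lipschitz/Taylor bounds remain valid along the entire trajectory — this is where the monotonicity of $V$ does the real work. Everything else is a routine Lyapunov argument. One subtlety worth stating carefully: the existence of the modulus $\omega$ with $\omega(r)\to 0$ requires $G$ to be differentiable at $z^*$ with $\nabla G$ continuous there, which is implied by \eqref{eq:smoothness_ode} together with differentiability; I would note this explicitly rather than silently invoking it.
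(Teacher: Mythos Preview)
Your proposal is correct and follows essentially the same Lyapunov argument as the paper: both differentiate $\|z(t)-z^*\|_P^2$, Taylor-expand $G$ around $z^*$, use the negative-definiteness hypothesis to dominate the remainder for $z$ close enough, and conclude exponential decay via Gr\"onwall. The only cosmetic difference is that the paper assumes an explicit second-order smoothness bound to get a cubic remainder $\frac{H\bar P}{2}\|z-z^*\|^3$ (yielding explicit constants for $\delta$ and $\rho$), whereas you work with a general modulus $\omega(r)\to 0$ coming from continuity of $\nabla G$; your treatment of forward invariance is also slightly more explicit than the paper's.
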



The next proposition shows that the above sufficient condition is essentially tight. The only slack between these results is boundary case when the Jacobian-like term $\frac{1}{2}\pran{\nabla G(z^*)^T P + P\nabla G(z^*)}$ is only negative semidefinite, in which case high-order derivatives will determine whether the point is attractive. 


\begin{prop}\label{prop:general_necessary_condition}
    Consider a fixed point $z^*$ to the dynamic \eqref{eq:general_ODE}, namely $G(z^*)=0$. For any positive definite matrix $P\succ 0$, if
    \begin{equation}\label{eq:necessary}
    \lambda_{max}\left(\frac{1}{2}\pran{\nabla G(z^*)^T P + P\nabla G(z^*)}\right) >0 \ , 
    \end{equation}
    then $z^*$ is not a linear attractor to the dynamic $\dot z = G(z)$ in the $P$ norm.
\end{prop}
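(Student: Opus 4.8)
The plan is to argue by contradiction. Suppose $z^*$ \emph{were} a linear attractor in the $P$ norm, witnessed by some constants $\delta,\rho>0$. I will exhibit initial conditions arbitrarily close to $z^*$ along which the squared $P$-distance to $z^*$ is strictly increasing at $t=0$, which is incompatible with the required bound $\|z(t)-z^*\|_P\le e^{-\rho t}\|z(0)-z^*\|_P$.

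Write $J=\nabla G(z^*)$ and $S=\tfrac12\pran{J^{T}P+PJ}$ for the symmetric matrix appearing in \eqref{eq:necessary}, and let $\mu:=\lambda_{\max}(S)>0$ with a corresponding unit eigenvector $v$, so that $v^{T}Sv=\mu$. For small $\epsilon>0$ let $z(t)$ be the solution of \eqref{eq:general_ODE} with $z(0)=z^*+\epsilon v$; by the local Lipschitz bound \eqref{eq:smoothness_ode} this solution exists (and is $C^1$) on a neighborhood of $t=0$, and for $\epsilon<\delta$ we have $z(0)\in B(z^*,\delta)$, so the assumed linear-attractor inequality must hold along it.

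The key step is evaluating the derivative at $t=0$ of $\phi(t):=\|z(t)-z^*\|_P^2=(z(t)-z^*)^{T}P(z(t)-z^*)$. Using $\dot z(0)=G(z(0))$, $G(z^*)=0$, and differentiability of $G$ at $z^*$ (so $G(z^*+\epsilon v)=\epsilon Jv+o(\epsilon)$), one obtains
\[
\phi'(0)=2(z(0)-z^*)^{T}P\,\dot z(0)=2\epsilon^2\,v^{T}PJv+o(\epsilon^2)=2\epsilon^2\,v^{T}Sv+o(\epsilon^2)=2\mu\epsilon^2+o(\epsilon^2),
\]
where $v^{T}PJv=v^{T}Sv$ because the scalar $v^{T}PJv$ equals its own transpose. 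Hence $\phi'(0)>0$ once $\epsilon$ is small enough, and how small depends only on $\mu$ and the modulus of differentiability of $G$ at $z^*$, not on $\delta$ or $\rho$. On the other hand, squaring the linear-attractor inequality gives $\phi(t)\le e^{-2\rho t}\phi(0)$ for all $t\ge 0$, with equality at $t=0$ and $\phi(0)=\epsilon^2\,v^{T}Pv>0$; comparing these two $C^1$ functions at $t=0$ forces $\phi'(0)\le-2\rho\,\phi(0)<0$, the desired contradiction. Since $\delta,\rho$ were arbitrary, $z^*$ cannot be a linear attractor in the $P$ norm.

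I do not anticipate a real obstacle here; the only points needing care are that $\phi$ is genuinely differentiable at $0$ (which holds because a solution of an ODE with continuous right-hand side is $C^1$) and that $\epsilon$ can be chosen uniformly in $\delta,\rho$ (which is fine, since the $o(\epsilon^2)$ error depends only on $G$ near $z^*$). As an alternative that sidesteps differentiability-only Taylor estimates, one may substitute $u=P^{1/2}(z-z^*)$, turning the $P$ norm into the Euclidean norm and $S$ into a matrix congruent to — hence of the same inertia as — the symmetric part of $P^{1/2}JP^{-1/2}$, reducing the statement to the Euclidean case; but the direct computation above already suffices.
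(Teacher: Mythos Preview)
Your proof is correct and follows essentially the same route as the paper's: both initialize along the top eigenvector of $S=\tfrac12(\nabla G(z^*)^T P+P\nabla G(z^*))$, compute the time-derivative of $\|z(t)-z^*\|_P^2$ at $t=0$, and obtain a positive value that contradicts the monotone decay required of a linear attractor. The only cosmetic difference is that the paper controls the Taylor remainder quantitatively via the local bound $\|\nabla^2 G\|\le H$ (yielding a cubic error term and an explicit choice of $\epsilon$), whereas you invoke differentiability at $z^*$ to get an $o(\epsilon)$ remainder; your final phrase ``since $\delta,\rho$ were arbitrary'' is slightly misworded (they are fixed by the assumed attractor property, not arbitrary), but the logic is intact.
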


Applying these propositions to the four continuous-time dynamics we consider is the core of our proof of Theorem~\ref{thm:main}.
One key step in establishing these conditions is choosing the corresponding norms for studying different algorithms. As we can see in Figure~\ref{fig:cifar}, the trajectory of AGDA follows from a slightly skewed ellipsoid while that of GDA is more symmetric. This is fundamentally because GDA has simultaneous updates in primal and dual, while AGDA utilizes sequential updates. This difference results in different choices of norm $P$ when studying the two algorithms. It turns out the natural norms for GF, GDA and EGM are the same: the Euclidean norm with $P=I$, and the natural norm for AGDA is an scaled Euclidean norm with $P=\twomatrix{I}{\frac{1}{2} sB^T}{\frac{1}{2} sB}{I}$. These norms are obtained by examining the dynamics of each method when solving bilinear systems $L(x,y)=y^T B x$.

\begin{remark}
It holds for bilinear problem $L(x,y)=y^T B x$ that the LHS of \eqref{eq:sufficient_condition_AGDA} is $0$. This explains the circling over an ellipsoid behavior of AGDA when solving the bilinear problem. Indeed, the ellipsoid is given by the matrix $P=\twomatrix{I}{\frac{1}{2} sB^T}{\frac{1}{2} sB}{I}$. Also note that not all norms are equivalent in terms of linear attractors, indeed AGDA may linearly attract in the $P$-norm but fail to monotonically decrease the Euclidean norm.
\end{remark}

\subsection{Local Convergence of the Discrete-time Algorithms}\label{sec:discrete}
Here we extend our guarantees from Theorem \ref{thm:main} from the underlying ODEs to the discrete-time algorithms. Similar to the results stated in Section \ref{sec:local_ODE}, we first present general results and then apply them to the specific algorithms.

We consider a generic discrete-time algorithm with iterate update $z^+=g(z,s)$ and the dynamic around a fixed point $z^*$. As defined in \citet{Lu2020}, we say the $O(s)$-resolution ODE is gradient-based if for any $\delta>0$, there exists a constant $c>0$ such that it holds for any $z\in\{\|F(z)\|\le \delta\}$ and a small enough stepsize $s$ that
\begin{equation}\label{eq:gradient-based-ODE}
    \|z(s)-z^+\|\le c s^{3}  \|F(z)\| \ ,
\end{equation}
where $z^+=g(z,s)$, and $z(s)$ is the solution obtained at $t=s$ following its $O(s)$-resolution ODE with initial solution $z(0)=z$. Indeed, the $O(s)$-resolution ODE of AGDA, EGM, GDA are gradient-based when $L(x,y)$ is smooth enough. This is formalized in the follow proposition utilizing  fifth-order differentiablity, which is a fairly mild condition as in practice many objective function is analytical (i.e. infinitely differentiable), for example, any smooth loss with sigmoid/logistic activation function for GANs.

\begin{prop}\label{prop:gradient-based}
    Suppose $L(x,y)$ is fifth-order differentiable over $x$ and $y$ and $\nabla^j F(z)$ is bounded for $j=1,2,3,4$. 
    Then $O(s)$-resolution ODE of AGDA, EGM, GDA are gradient-based.
\end{prop}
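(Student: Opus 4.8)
The plan is to verify the defining estimate \eqref{eq:gradient-based-ODE} for each of the three algorithms by Taylor-expanding both the discrete iterate $z^+ = g(z,s)$ and the ODE solution $z(s)$ in powers of $s$ around a given point $z$, and then tracking the leading residual term. For each method we know from Proposition~\ref{prop:ODEs} the exact $O(s)$-resolution ODE $\dz = f_0(z) + s f_1(z)$ with $f_0(z) = -F(z)$; by definition of the $O(s)$-resolution ODE we already have $\|z(s) - z^+\| = o(s^2)$, so the residual is $O(s^3)$ and the only thing that needs proving is that the implicit constant in front of $s^3$ is proportional to $\|F(z)\|$ uniformly over the region $\{\|F(z)\| \le \delta\}$. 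Since $z^*$ is a stationary point with $F(z^*) = 0$, the region $\{\|F(z)\| \le \delta\}$ is a neighborhood of $z^*$, so "bounded" factors from higher derivatives of $F$ are legitimately bounded there.

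First I would write the Taylor expansions explicitly. For the ODE solution, repeated differentiation of $\dot z = f_0(z) + s f_1(z)$ gives $z(s) = z + s\,\dot z(0) + \tfrac{s^2}{2}\ddot z(0) + \tfrac{s^3}{6}\dddot z(0) + O(s^4)$, where $\dot z(0) = f_0(z) + s f_1(z)$, $\ddot z(0) = \nabla f_0(z)(f_0(z)+sf_1(z)) + s\nabla f_1(z)(\cdots)$, etc. The crucial structural observation is that every term in every one of these derivatives carries at least one factor of $f_0(z) = -F(z)$ or $f_1(z)$, and $f_1$ itself — being $\pm\tfrac12\nabla F(z)F(z)$ for GDA/EGM and $-\tfrac12\,[\,A\ B^T;B\ C\,]F(z)$-type for AGDA (evaluated along the trajectory, so the matrix is $\nabla^2 L$ at the current point) — is itself of the form (bounded matrix)$\cdot F(z)$. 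Hence $\|f_0(z)\|, \|f_1(z)\| \le c'\|F(z)\|$ near $z^*$, and by the chain rule every coefficient of $s^j$ in $z(s)-z$ is bounded by a constant times $\|F(z)\|$. The same Taylor bookkeeping for $z^+=g(z,s)$ (composing the one- or two-step update maps of GDA, AGDA, EGM, each of which is a polynomial in $s$ with coefficients built from $F$ and its derivatives, every monomial again containing a factor $F$) shows $\|z^+ - z\| \le c''s\|F(z)\|$ with all higher-order coefficients likewise controlled by $\|F(z)\|$. Subtracting, the $s^0, s^1, s^2$ terms cancel by the $o(s^2)$-defining property of the $O(s)$-resolution ODE, leaving $\|z(s) - z^+\| \le c\, s^3\|F(z)\|$, which is \eqref{eq:gradient-based-ODE}.

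The one point requiring care, and the main obstacle, is the uniformity and the explicit $\|F(z)\|$ dependence: I need the Taylor remainders to be controlled not just pointwise but uniformly over $\{\|F(z)\|\le\delta\}$, and with the $\|F(z)\|$ factor genuinely pulled out rather than absorbed into a generic $O(s^3)$. This is exactly where fifth-order differentiability of $L$ and boundedness of $\nabla^j F$ for $j=1,\dots,4$ enter: the Lagrange remainder in the order-three Taylor expansion of both $z(s)$ and $z^+$ involves fourth derivatives of the relevant maps, hence up to the fourth derivative of $F$, i.e. the fifth derivative of $L$; boundedness of these on the relevant neighborhood gives a single constant $c$ valid throughout $\{\|F(z)\|\le\delta\}$. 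To make the $\|F(z)\|$-proportionality airtight I would argue monomial-by-monomial: since $F$ appears linearly (at least to the first power) in $f_0$, in $f_1$, and in each coefficient of the discrete update map, every term in $z(s)-z$ and in $z^+-z$ factors as $\|F(z)\|$ times something bounded, and this homogeneity survives subtraction and the remainder estimate. I would carry out the AGDA case in most detail, as its two-stage asymmetric update and the $\nabla^2 L$-weighted drift term make the bookkeeping heaviest; GDA and EGM are the same computation with the simpler symmetric drift $\mp\tfrac{s}{2}\nabla F(z)F(z)$.
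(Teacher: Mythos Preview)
Your approach is correct and identifies precisely the mechanism that makes the result work: every term in both the discrete update $z^+-z$ and the ODE flow $z(s)-z$ carries at least one factor of $F(z)$, so after the $s^0,s^1,s^2$ cancellation the $O(s^3)$ residual inherits the $\|F(z)\|$ proportionality, with uniformity coming from the global bounds on $\nabla^j F$. The paper's own proof does not carry out this expansion directly; instead it cites Corollary~2 of \citet{Lu2020} for GDA and EGM, and for AGDA it checks the hypotheses of Theorem~3 in \citet{Lu2020}---namely that the Taylor coefficients $g_1,g_2,g_3$ of the AGDA update from \eqref{eq:AGDA-expansion} satisfy $\|g_j(z)\|\le O(\|F(z)\|)$ with bounded derivatives $\nabla^k g_j$---and then invokes that black-box theorem. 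Your argument is essentially an in-place reproof of the relevant piece of \citet{Lu2020}, so the two approaches are the same mathematics packaged differently: yours is self-contained but longer, the paper's is a two-line citation. One small correction: the set $\{\|F(z)\|\le\delta\}$ need not be a bounded neighborhood of $z^*$, but this is harmless since the proposition assumes $\nabla^j F$ is globally bounded rather than just locally.
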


Using this condition, we find that if the $z^*$ is a linear attractor to the $O(s)$-resolution ODE, then it is also a linear attractor to the discrete-time algorithm.








\begin{theorem}\label{thm:general-dta}
    Consider a discrete-time algorithm with iterate update $z^+=g(z,s)$, and its $O(s)$-resolution ODE \eqref{eq:general_ODE}. Suppose the $O(s)$-resolution ODE is gradient-based. Suppose $z^*$ is a linear attractor to its $O(s)$-resolution ODE and $$\lambda_{max}\left(\frac{1}{2}\pran{\nabla G(z^*)^T P + P\nabla G(z^*)}\right)\ge b s\ ,$$
    with $b>0$.
    Then there exists $s^*$, such that for any $s\le s^*$, $z^*$ is a linear attractor to the discrete-time algorithm.
\end{theorem}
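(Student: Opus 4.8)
The plan is to transfer the linear attractivity from the ODE to the discrete-time iteration by showing that one discrete step $z^+ = g(z,s)$ tracks the ODE solution $z(s)$ closely enough that the $P$-norm contraction survives. Concretely, let $z^*$ be the fixed point and write $e = z - z^*$. Since the $O(s)$-resolution ODE is gradient-based, Theorem~\ref{thm:general-dta}'s hypothesis gives $\|z(s) - z^+\| \le c s^3 \|F(z)\|$, and since $z^*$ is a stationary point with $L$ smooth, $\|F(z)\| = O(\|e\|)$ near $z^*$, so $\|z(s) - z^+\| \le c' s^3 \|e\|$. On the ODE side, Proposition~\ref{thm:general_sufficient_condition} applied with the given $P$ (together with the quantitative gap $\lambda_{\max}(\tfrac12(\nabla G(z^*)^T P + P \nabla G(z^*))) \le -bs$) yields a contraction of the form $\|z(s) - z^*\|_P \le e^{-\rho s}\|z - z^*\|_P$ for $z$ in a fixed neighborhood $B(z^*,\delta)$, where $\rho$ is proportional to $bs$ (this is where the $bs$ scaling of the eigenvalue gap is used: it forces $\rho = \Theta(s)$ rather than something that degrades as $s \to 0$).

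The key estimate is then to combine these. Using the triangle inequality in the $P$-norm and equivalence of norms (with constants $\sqrt{\lambda_{\min}(P)}, \sqrt{\lambda_{\max}(P)}$, which are bounded uniformly for small $s$ since $P \to I$),
\[
\|z^+ - z^*\|_P \le \|z(s) - z^*\|_P + \|z(s) - z^+\|_P \le e^{-\rho s}\|z - z^*\|_P + c'' s^3 \|e\| \le \left(e^{-\rho s} + c''' s^3\right)\|z - z^*\|_P.
\]
Now I would use $e^{-\rho s} \le 1 - \rho s + \tfrac12 \rho^2 s^2$ and $\rho = \Theta(s)$, so $\rho s = \Theta(s^2)$ dominates the $s^3$ correction: there is $s^* > 0$ such that for all $s \le s^*$, the multiplier $e^{-\rho s} + c''' s^3 \le e^{-\rho s /2} < 1$. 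Hence each discrete step contracts the $P$-norm by a fixed factor $q(s) = e^{-\rho s/2} \in (0,1)$, which gives $\|z_k - z^*\|_P \le q(s)^k \|z_0 - z^*\|_P$, i.e., $z^*$ is a linear attractor to the discrete-time algorithm, provided the iterates stay in $B(z^*,\delta)$ — but that is immediate by induction since the $P$-norm is non-increasing along the iteration once we are in the ball.

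The main obstacle is bookkeeping the neighborhood and the various constants uniformly in $s$: the gradient-based bound $\|z(s)-z^+\|\le cs^3\|F(z)\|$ holds only on $\{\|F(z)\|\le \delta\}$ and for small enough $s$, the ODE contraction from Proposition~\ref{thm:general_sufficient_condition} holds only on some $B(z^*,\delta')$, and the linearization $\|F(z)\| \le L_F \|e\|$ holds only on yet another ball; I need to intersect these and check that the contraction factor $q(s)$ is bounded away from $1$ in a way that does not force $\delta \to 0$ as $s \to 0$. The scaling hypothesis $\lambda_{\max}(\cdot) \le -bs$ is exactly what makes this work: it guarantees the ODE contraction rate $\rho \asymp s$ does not vanish faster than the $O(s^3)$ discretization error, so the $s^3$ term is genuinely lower order. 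A secondary technical point is justifying the $P$-norm contraction for the ODE flowed over the finite time $t=s$ (not just infinitesimally) — this follows from Proposition~\ref{thm:general_sufficient_condition} itself, whose conclusion $\|z(t)-z^*\|_P \le e^{-\rho t}\|z(0)-z^*\|_P$ already gives the finite-time statement at $t = s$, so no extra work is needed there beyond tracking that $\rho$ inherits the $\Theta(s)$ scaling from the $bs$ eigenvalue bound.
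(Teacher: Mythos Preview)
Your proposal is correct and follows essentially the same route as the paper: triangle-inequality split $\|z^+ - z^*\|_P \le \|z(s)-z^*\|_P + \|z(s)-z^+\|_P$, the ODE contraction from Proposition~\ref{thm:general_sufficient_condition} with rate $\rho = \Theta(s)$ (so $e^{-\rho s} = 1 - \Theta(s^2)$), the gradient-based bound $\|z(s)-z^+\| \le c s^3\|F(z)\| \le c\gamma s^3\|z-z^*\|$, and the observation that $s^2$ beats $s^3$ for small $s$. The paper makes the constants explicit (taking $\delta = \min\{bs\sqrt{\underline{P}}/(H\bar P),1\}$ and $s^* = b\sqrt{\underline{P}}/(8c\gamma\bar P^2)$), so your worry about $\delta$ shrinking with $s$ is real but harmless: the theorem only asserts that for each fixed $s\le s^*$ the point is a linear attractor, and an $s$-dependent basin is permitted.
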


As a direct consequence of Theorem \ref{thm:general-dta}, we know that the conditions \eqref{eq:sufficient_condition_GDA}, \eqref{eq:sufficient_condition_PPM} and \eqref{eq:sufficient_condition_AGDA} are sufficient conditions for $z^*$ being an attractive for each of GDA, AGDA and EGM when the stepsize $s$ is reasonably small (i.e., smaller than a constant). Lastly, we note that when the stepsize $s$ is reasonably small, the stationary points of the $O(s)$-resolution ODEs we have considered so far are indeed also stationary points of $L(x,y)$, completing our recovery from ODE guarantees back to the discrete-time algorithms themselves.
\begin{prop} \label{prop:stationary}
    When the stepsize $s$ is sufficiently small, the stationary points to the $O(s)$-resolution ODE of GDA, EGM and AGDA, namely \eqref{eq:ODE-GDA}, \eqref{eq:ODE-PPM} and \eqref{eq:ODE-AGDA} are also stationary points to the general minimax problem \eqref{eq:minimax}.
\end{prop}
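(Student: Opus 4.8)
\emph{Proof proposal.} The plan is to rewrite each of the three $O(s)$-resolution ODEs in the common ``preconditioned gradient flow'' form $\dz = -M_s(z)F(z)$, where $M_s(z) = I + \tfrac{s}{2}K(z)$ is a small perturbation of the identity, and then observe that once $s$ is small enough $M_s(z)$ is invertible, so that its only zeros coincide with those of $F$. Indeed, the GDA ODE \eqref{eq:ODE-GDA} is $\dz = -\pran{I + \tfrac{s}{2}\nabla F(z)}F(z)$, the EGM ODE \eqref{eq:ODE-PPM} is $\dz = -\pran{I - \tfrac{s}{2}\nabla F(z)}F(z)$, and the AGDA ODE \eqref{eq:ODE-AGDA} is $\dz = -\pran{I + \tfrac{s}{2}K(z)}F(z)$ with $K$ the Jacobian-like matrix appearing there; in every case the entries of $K$ are, up to signs, second-order partials of $L$, so $\|K(z)\|$ is controlled by $\|\nabla F(z)\|$.

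Next I would use boundedness of $\nabla F$ --- either globally, as in the hypothesis of Proposition~\ref{prop:gradient-based}, or merely on a bounded neighborhood, which is automatic whenever $L$ is $C^2$ --- to obtain a constant $H>0$ with $\|K(z)\|\le H$ on the set of interest. Then for every stepsize $s < 2/H$ the operator $\tfrac{s}{2}K(z)$ has operator norm strictly below $1$, and hence $M_s(z) = I + \tfrac{s}{2}K(z)$ is invertible, by the Neumann series. This step must be phrased in terms of the operator norm rather than eigenvalues, since $K(z)$ need not be symmetric.

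Finally, if $\bar z$ is a stationary point of one of these $O(s)$-resolution ODEs, then $M_s(\bar z)F(\bar z)=0$, and left-multiplying by $M_s(\bar z)^{-1}$ yields $F(\bar z)=0$, i.e. $\nabla_x L(\bar z)=0$ and $\nabla_y L(\bar z)=0$, which is exactly first-order stationarity for \eqref{eq:minimax} (the converse inclusion is trivial, since $F(\bar z)=0$ makes each right-hand side vanish). If one only has a local bound on $\nabla F$, the same argument shows every ODE-stationary point inside that neighborhood is stationary for \eqref{eq:minimax}. I do not anticipate a serious obstacle: the two points requiring a little care are writing all three right-hand sides in the factored form $-M_s(z)F(z)$ --- immediate, since each equals $-F(z)$ plus an $O(s)$ term that is linear in $F(z)$ --- and arguing invertibility of the non-symmetric $M_s(z)$ via its operator norm, which also makes the stepsize threshold explicit as $s < 2/\sup_z\|K(z)\|$.
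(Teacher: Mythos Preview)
Your proposal is correct and matches the paper's own proof essentially line for line: the paper also factors each right-hand side as $-(I+\tfrac{s}{2}K(z))F(z)$, observes that $\|\tfrac{s}{2}K(z)\|\le \tfrac12$ for small $s$ makes the prefactor full rank, and concludes $G(z)=0\iff F(z)=0$. Your write-up is simply more explicit about the Neumann-series invertibility and the stepsize threshold, but the idea is identical.
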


\section{Phase Transitions between Limit Points and Limiting Cycles} \label{sec:hopf}
The GAN problem considered in Section~\ref{sec:GAN} demonstrated that divergence, cycling, and convergence all easily arise from a simple formulation of GAN training. Meanwhile,  Figure~\ref{fig:GDA-regularization} showed that adding mild amounts of regularization sufficed to cause the trajectories of the considered methods to all transition to convergence. Here we explain these phase transitions in the discrete algorithmic iterations using their related ODEs. 
In continuous-time dynamical systems, the transformation from an (attractive or repulsive) limit point into a limit cycle (or vice versa) can occur when the eigenvalues of the Jacobian $\nabla G(z)$ transition from being all negative real part to having some nonnegative real values. When exactly one conjugate pair of eigenvalues crosses to having positive real part, the dynamics of the system are described as a {\it Hopf Bifurcation}.

Such transitions are controlled by this pair of eigenvectors and hence are fundamentally a two-dimensional phenomena. This notion can be generalized to multi-dimension problems using the Central Manifold Theorem~\citep{Kelley1967}, which essentially state that one can locally decompose the multi-dimension spaces into independently evolving dynamics on one-dimension and/or two-dimension manifold. Thus, one could reparameterize the dynamics $\dot z=G(z)$ to isolate these behaviors, however, the details of such an approach are beyond the scope of this work.

\begin{figure*}\centering
    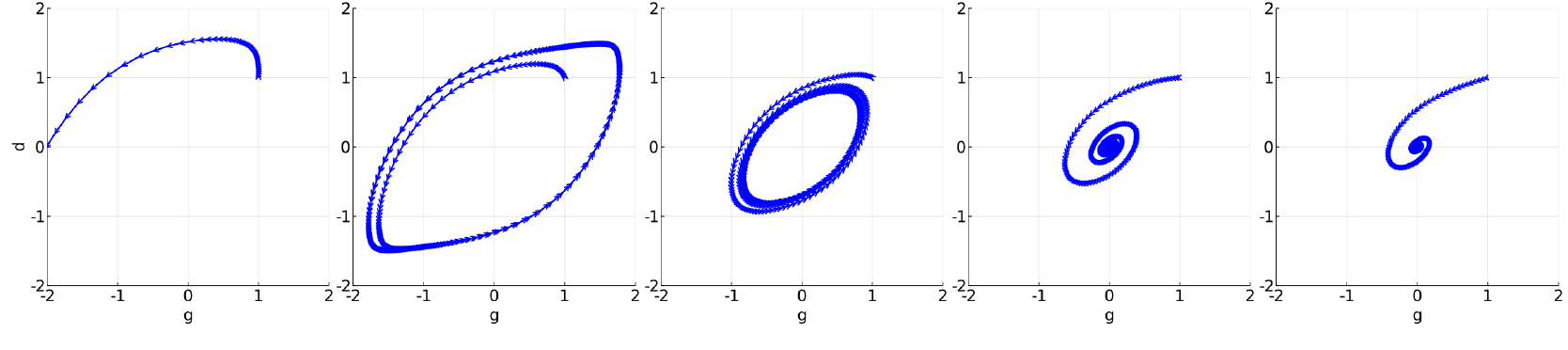
    \caption{Regularization causing GDA trajectory on the GAN~\eqref{eq:regularized} to undergo a supercritical Hopf bifurcation.}
    \label{fig:gd}
\end{figure*}

To formalize the phase transition of solution path dynamics as a hyperparameter $\alpha\in\RR$ changes, we consider families of minimax problems
$$ \min_x \max_y L(x,y,\alpha).$$
For example, this captures our previously considered setting of regularizing our one-layer GAN setup~\eqref{eq:regularized} on CIFAR-10 data. The following theorem guarantees for a wide class of Guassian data and latent distributions applying GDA to~\eqref{eq:regularized} has a repulsive fixed point that transitions to become attractive with sufficient regularization.
\begin{theorem}\label{thm:gaussian}
    For any covariance matrices $\Sigma\succeq \Sigma'$ in $p_{data} \sim N(0,\Sigma)$ and $p_{latent} \sim N(0,\Sigma')$, the origin is a stationary point of~\eqref{eq:regularized} which is repulsive to the GDA dynamics~\eqref{eq:ODE-GDA} for $\alpha=0$ and transitions to be attractive whenever
    $$ \alpha \geq \frac{\lambda_{max}(\Sigma -\Sigma')}{4} + O(s) \ . $$
\end{theorem}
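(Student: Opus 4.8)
The plan is to specialise part~2 (GDA) of Theorem~\ref{thm:main} to this family: compute $F$ and its Jacobian at the origin, then read off which $\alpha$ make the two positive-definiteness conditions~\eqref{eq:sufficient_condition_GDA} hold.

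\emph{Step 1 --- the origin is stationary.} As $p_{data}$ and $p_{latent}$ are centred, $\bar s=0$ and $D(s)=(1+e^{d^T s})^{-1}$. Evaluating $\nabla_g L$ and $\nabla_d L$ at $(g,d)=(0,0)$ with $\sigma(0)=\tfrac12$, every surviving term is a multiple of $\mathbb{E}[s]=0$, of $\mathbb{E}[e]=0$, or of the (vanishing) regularizer gradient; hence $F(0,0)=0$ for all $\alpha\ge 0$, and the origin is automatically a fixed point of~\eqref{eq:ODE-GDA}, whose right-hand side is $-F(z)-\tfrac s2\nabla F(z)F(z)$.

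\emph{Step 2 --- curvature at the origin.} Differentiating once more under the two Gaussian expectations, using $\sigma'(0)=\tfrac14$, the vanishing of odd Gaussian moments, $\mathbb{E}[ss^T]=\Sigma$, $\mathbb{E}[ee^T]=\Sigma'$, and that the $g$-Hessian of $\log\!\big(1-D(g+e)\big)$ vanishes at $d=0$, yields the blocks $A=\nabla^2_{xx}L$, $B=\nabla^2_{xy}L$, $C=-\nabla^2_{yy}L$ at the origin: $A=\alpha I$, $B=\beta I$ for a fixed scalar $\beta$ (so $BB^T=B^TB=\beta^2 I$), and $C$ an affine function of $\alpha$ and the covariances of the form $\alpha I$ shifted by a fixed multiple of $\Sigma-\Sigma'$ --- the matrix whose largest eigenvalue sets the threshold, with $\Sigma\succeq\Sigma'$ keeping that eigenvalue nonnegative. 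I expect carrying the sign conventions correctly through both expectations and through the logistic to be the main bookkeeping obstacle, since any slip here propagates directly into the final bound.

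\emph{Step 3 --- apply Theorem~\ref{thm:main} and solve for $\alpha$.} By part~2 of Theorem~\ref{thm:main}, the origin is a linear attractor of~\eqref{eq:ODE-GDA} in the Euclidean norm iff $A+\tfrac s2(A^2-BB^T)\succ 0$ and $C+\tfrac s2(C^2-B^TB)\succ 0$, and it is repulsive once either is strictly violated. At $\alpha=0$ the first block equals $\tfrac s2(0-\beta^2 I)\prec 0$, a strict violation, giving the $\alpha=0$ claim. For general $\alpha$, since $A$, $C$ and the $\alpha$-shifts are polynomials in one symmetric matrix together with multiples of $I$, both matrix inequalities diagonalise simultaneously into a finite list of scalar conditions $t+\tfrac s2 t^2>\tfrac s2\beta^2$, with $t$ equal to $\alpha$ for the $A$-block and to $\alpha$ shifted by a multiple of each eigenvalue of $\Sigma-\Sigma'$ for the $C$-block; the binding case is the smallest such $t$, which is controlled by $\lambda_{\max}(\Sigma-\Sigma')$. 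Solving the resulting quadratic in $\alpha$ shows the whole family becomes and stays positive exactly once $\alpha\ge\tfrac{\lambda_{\max}(\Sigma-\Sigma')}{4}+O(s)$, the $O(s)$ remainder absorbing the $\tfrac s2(\cdot)$ corrections and the nonstrict boundary case (and, via Proposition~\ref{prop:gradient-based} and Theorem~\ref{thm:general-dta}, the same conclusion passes to the discrete GDA iteration for small $s$). The two hard pieces are the Step~2 curvature computation and this eigenvalue bookkeeping that collapses~\eqref{eq:sufficient_condition_GDA} to one sharp scalar threshold with explicit $O(s)$ control; everything else is a direct invocation of Theorem~\ref{thm:main}.
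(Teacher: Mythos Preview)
Your approach is correct but genuinely different from the paper's. You route through Theorem~\ref{thm:main} part~2: compute the Hessian blocks $A=\alpha I$, $B=-\tfrac12 I$, $C=\alpha I-\tfrac14(\Sigma-\Sigma')$ of $\nabla F(0,0)$, plug into~\eqref{eq:sufficient_condition_GDA}, and reduce both matrix inequalities to the single scalar condition $t+\tfrac s2(t^2-\tfrac14)>0$ with $t=\alpha-\tfrac{\lambda_i}{4}$ over the eigenvalues of $\Sigma-\Sigma'$. The paper instead computes the full GDA ODE explicitly, differentiates to obtain the $2\times2$-block Jacobian $\nabla G(0,0)$, and argues directly on its spectrum: at $\alpha=0$ both diagonal blocks are positive definite so some eigenvalue has positive real part; for large $\alpha$ a Gershgorin argument forces all eigenvalues into the left half-plane. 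Your route is shorter and yields the constant $\tfrac{\lambda_{\max}(\Sigma-\Sigma')}{4}$ transparently, whereas the paper's Gershgorin step as written only delivers ``sufficiently large $\alpha$'' and leaves the sharp constant implicit in the diagonal-block formula. On the other hand, the paper's direct eigenvalue argument at $\alpha=0$ establishes a genuinely positive real-part eigenvalue (hence instability in the standard ODE sense), while the necessary direction of Theorem~\ref{thm:main} only gives you ``not a linear attractor in the Euclidean norm,'' which is formally weaker than ``repulsive''; if you want to match the theorem's word exactly, you should note that the strictly violated block $-\tfrac s8 I$ is the full symmetric part in the $x$-coordinates, so $\nabla G(0,0)+\nabla G(0,0)^T$ has a positive eigenvalue and hence $\nabla G(0,0)$ does too.

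Two small cautions on execution. First, your Step~2 sketch is right but the sign bookkeeping really is the whole computation here; once you evaluate $\sigma'(0)=\tfrac14$ and the second Gaussian moments you should land on exactly the $A,B,C$ above, and in particular $B$ is a scalar multiple of the identity so $BB^T=B^TB=\tfrac14 I$ with no $\Sigma'$-dependence at order $1$. Second, since $\Sigma\succeq\Sigma'$ gives $\lambda_{\max}(\Sigma-\Sigma')\ge 0$, the $C$-block is indeed always the binding one, so the $A$-block threshold $\alpha>O(s)$ is absorbed automatically --- worth stating so the reader sees why only one eigenvalue appears in the final bound.
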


In the rest of this section, we consider concrete examples of Hopf bifurcations arising. For ease of presentation, we consider a two-dimension system that corresponds to a conjugate pair of eigenvalues crossing from negative to positive real part (potentially coming from reparameterizing to isolate the independently evolving two-dimension manifold). We suppose the stationary point of interest occurs at the origin $z=0$ and the dynamics $\dot z=G(z)$ at the critical $\alpha^*$ are in the form
\begin{equation} \label{eq:diag-form}
    \begin{bmatrix} \dot x \\ \dot y \end{bmatrix} = \begin{bmatrix} 0 & -w \\ w & 0\end{bmatrix}\begin{bmatrix} x \\ y \end{bmatrix} + \begin{bmatrix} P(x,y) \\ Q(x,y) \end{bmatrix}
\end{equation}
where $w\neq 0$ and $P,Q$ both have $P(0,0)=Q(0,0)=0$ and $\nabla P(0,0)=\nabla Q(0,0)=0$. Indeed, most of the dynamic around the stationary point $z=0$ and critical point $\alpha^*$ can be transformed to \eqref{eq:diag-form} except for some degenerated cases~\citep{Kuznetsov9801}.
For dynamics of the form~\eqref{eq:diag-form}, the shape of the Hopf bifurcation occurring for some critical $\alpha^*$ is determined by the {\it First Lyapunov Coefficient}, defined as
\begin{align} 
   l_1(0) = &\frac{1}{8w}\left(P_{xxx}+P_{xyy} + Q_{xxy}+Q_{yyy}\right)\nonumber\\
   & +\frac{1}{8w^2}(P_{xy}(P_{xx}+P_{yy})-Q_{xy}(Q_{xx}+Q_{yy})\nonumber\\
   & \hskip2cm -P_{xx}Q_{xx} +P_{yy}Q_{yy}) \label{eq:lyapunov formula}
\end{align}
where subscripts denote partial derivatives at $(0,0)$.

Depending on the sign of this coefficient, two types of bifurcations can arise, {\it supercritical} and {\it subcritical}. A supercritical bifurcation (occuring when $l_1(0)<0$) corresponds to an attractive limit cycle transforming into an attractive limit point and a subcritical bifurcation (occuring when $l_1(0)>0$) corresponds to a repulsive limit point transforming into an attractive limit point surrounded by a repulsive limit cycle, respectively.

\subsection{GAN Hopf Bifurcations}
First, we illustrate a supercritical Hopf bifurcation by considering a two-dimensional case of the GAN formulation~\eqref{eq:regularized} fixing $p_{data}$ and $p_{latent}$ to both produce $\pm 1$ with probability one half each. Consequently, if the generator plays $g=0$, it exactly matches the true data distribution and is the desired solution. However, GDA fails to converge at all for this problem unless sufficient regularization is added. Figure~\ref{fig:gd} shows GDA with $s=0.2$ and full gradient evaluations diverging or cycling when $\alpha \leq 0.25$ and converging when $\alpha\geq 0.325$ (much like we previously saw in Figure~\ref{fig:GDA-regularization}).

Numerically, we see that the transition between cycling and convergence happens around $ \alpha^* \approx 0.26872 $. To verify this is a supercritical Hopf bifurcation, we simply need to write the GDA dynamics~\eqref{eq:GDA} in the form~\eqref{eq:diag-form} and then compute the Lyapunov coefficient. One valid reparameterization of the GDA dynamics on $(g,d)$ comes from considering
$$  \begin{bmatrix} x \\ y \end{bmatrix} \approx \begin{bmatrix} 0.57735 & 0.57735 \\ -1 & 1 \end{bmatrix} \begin{bmatrix} g \\ d \end{bmatrix} $$
which for our example problem has 
\begin{equation*}
    \begin{bmatrix} \dot x \\ \dot y \end{bmatrix} \approx \begin{bmatrix} 0 & -0.43463 \\ 0.43463 & 0\end{bmatrix}\begin{bmatrix} x \\ y \end{bmatrix} + o(\|(x,y)\|).
\end{equation*}
Then computing higher derivatives of these dynamics\footnote{At the critical $\alpha^*\approx 0.26872 $, our reparameterization has $ w \approx 0.434633 $,
$ P_{xxx} \approx -0.20695 $,
$ P_{xyy} \approx  0.01227 $,
$ Q_{xxy} \approx -0.20695 $, and
$ Q_{yyy} \approx  0.01227 $} verifies $l_1(0) \approx -0.11198 <0$ and so indeed the phenomena in Figure~\ref{fig:gd} is a supercritical Hopf bifurcation.

\begin{figure*}\centering
    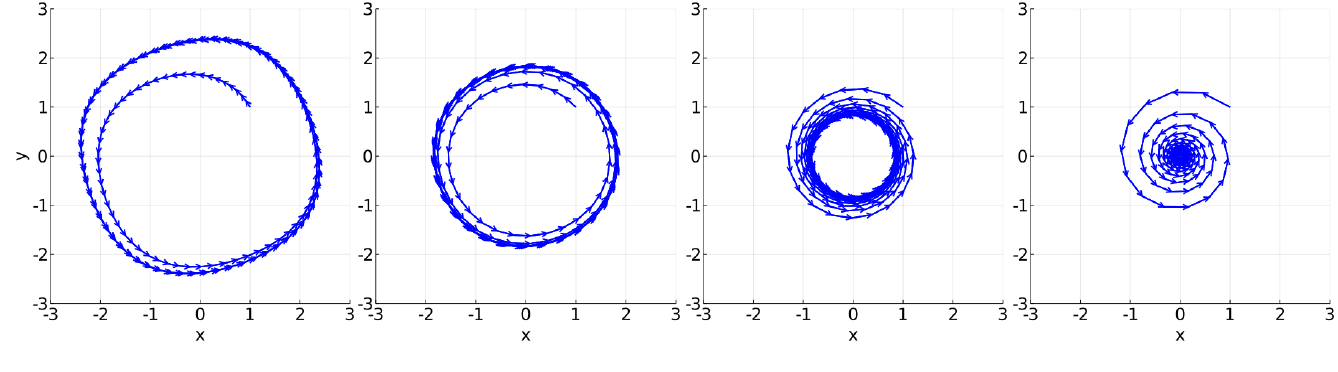
    \caption{EGM trajectory on the polynomial~\eqref{eq:W} undergoing a supercritical Hopf bifurcation as $\alpha$ increases.}
    \label{fig:W}
\end{figure*}
\begin{figure*}\centering
    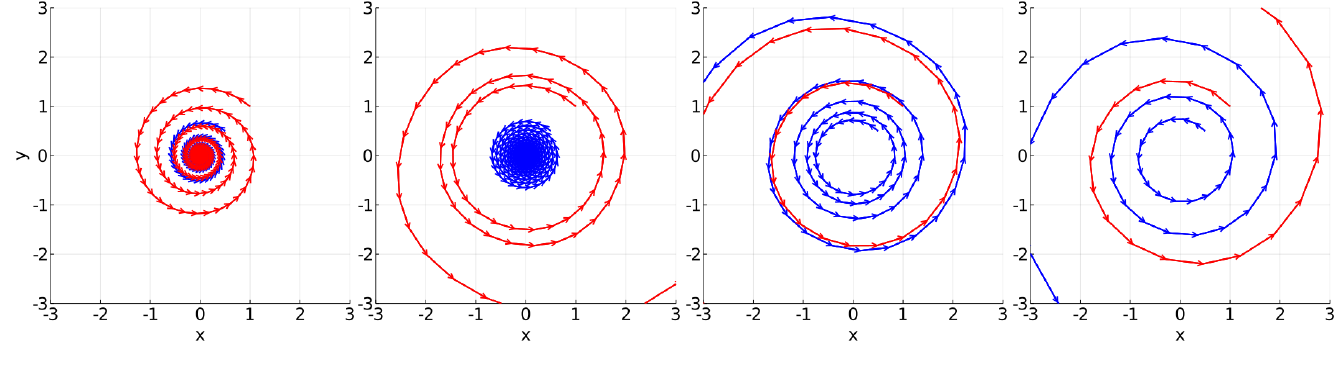
    \caption{GDA trajectories on~\eqref{eq:M} from two different initializations are shown. These dynamics undergo a subcritical Hopf bifurcation as a repulsive limit cycle collapses into the origin transforming it to from attractive to repulsive.}
    \label{fig:M}
\end{figure*}

\subsection{Polynomial Supercritical Bifurcation}

Consider the degree four polynomial objective function
\begin{equation}\label{eq:W}
   \min_x\max_y\ f(x) + \alpha xy - f(y) 
\end{equation}
where $ f(x) = (x+3)(x+1)(x-1)(x-3) $.
The behavior of EGM with $(x_0,y_0)=(1,1)$ and $s=0.002$ as $\alpha$ varies is shown in Figure~\ref{fig:W}. As $\alpha$ increases past $\alpha^* \approx 165$, the trajectories attractive limit cycle contracts towards the origin, converting the origin from a repulsive limit point to an attractive one.

Considering the related ODE dynamics~\eqref{eq:ODE-PPM} at $\alpha^*=\sqrt{20400}\approx 143$, the dynamics undergo a supercritical Hopf Bifirucation, which matches the transition we observed in Figure~\ref{fig:W}.
To verify this, we need to verify that these dynamics are of the form~\eqref{eq:diag-form} and compute the associated first Lyapunov coefficient. This computation is detailed in the appendix, finding that 
$w\approx 148.542$ and $l_1(0) \approx -0.04362$
and consequently, this phase transition corresponds to a supercritical bifurcation.

The mild difference between the discrete-time iteration's critical $\alpha^*$ around $165$ and the ODE's critical point around $143$ is not unexpected as the ODE only approximates the behavior of the first-order method to an accuracy $o(s^2)$. Higher fidelity ODEs could be used following closer to the discrete-time iteration, reducing this difference in when the bifurcation occurs.


\subsection{Polynomial Subcritical Bifurcation}

In a subcritical Hopf bifurcation , as $\alpha$ varies through $\alpha^*$, a repulsive limit cycle collapses into being a repulsive limit point (or in reverse, a repulsive limit point splits into being a repulsive limit cycle, around an attractive limit point). This behavior can be observed on a degree four polynomial objective function. Consider applying GDA to
\begin{equation}\label{eq:M} 
    \min_x\max_y\ g(x) +\alpha xy - g(y)
\end{equation}
where $ g(x) = -(x+3)(x+1)(x-1)(x-3) $.
As $\alpha$ decreases past $\alpha^*\approx 135$, the origin changes from being repulsive to attractive, surrounded by a repulsive cycle. Further decreasing $\alpha$ grows this repulsive cycle, creating a larger area attracted to the origin. Figure~\ref{fig:M} shows trajectories initialized at $(1/2,1/2)$ and $(1,1)$ which are excluded from this attractive region by $\alpha=125$ and $\alpha=150$, respectively.

Considering the related ODE dynamics~\eqref{eq:ODE-GDA} at $\alpha^*=\sqrt{20400}\approx 143$, the dynamics undergo a subcritical Hopf bifirucation, verified in the appendix as $w\approx 148.542$ and $l_1(0) \approx 0.04362$.
Since $l_1(0)$ is positive, this phase transition indeed corresponds to a subcritical bifurcation.

\section{Conclusion}
Inspired by the topologically different limiting behavior seen in GAN training, we derived necessary and sufficient conditions for a stationary point to be attractive for each of GDA, AGDA, and EGM that avoid sending the stepsize parameter to zero. 
These results are based understanding the ODE dynamics related to each method, which further allows us to describe the observed phase transitions to convergence as Hopf bifurcations. This explains the transitions observed when adding regularization to our simplified family of GAN training problems.


\bibliographystyle{apalike} 
\bibliography{references}

\appendix

\section{More trajectories on simple CIFAR GANs}\label{app:batch}

The trajectories presented in Figure~\ref{fig:cifar} and~\ref{fig:GDA-regularization} all use a moderate batch size of $10$, reducing stochastic effects. Figure~\ref{fig:batch} presents $500$ steps of a sample trajectory when this batch size is $1$, $10$, or $100$ for each method. Largely, the dynamics of GDA, AGDA, and EGM stay the same. GDA still diverges and AGDA still converges into an attractive limit cycle but with much greater variance in how closely it follows it. EGM still converges towards the origin, eventually staying in a ball around the origin whose radius shrinks as the variance in the gradient samples shrinks.

\begin{figure*}[h]
    \centering
    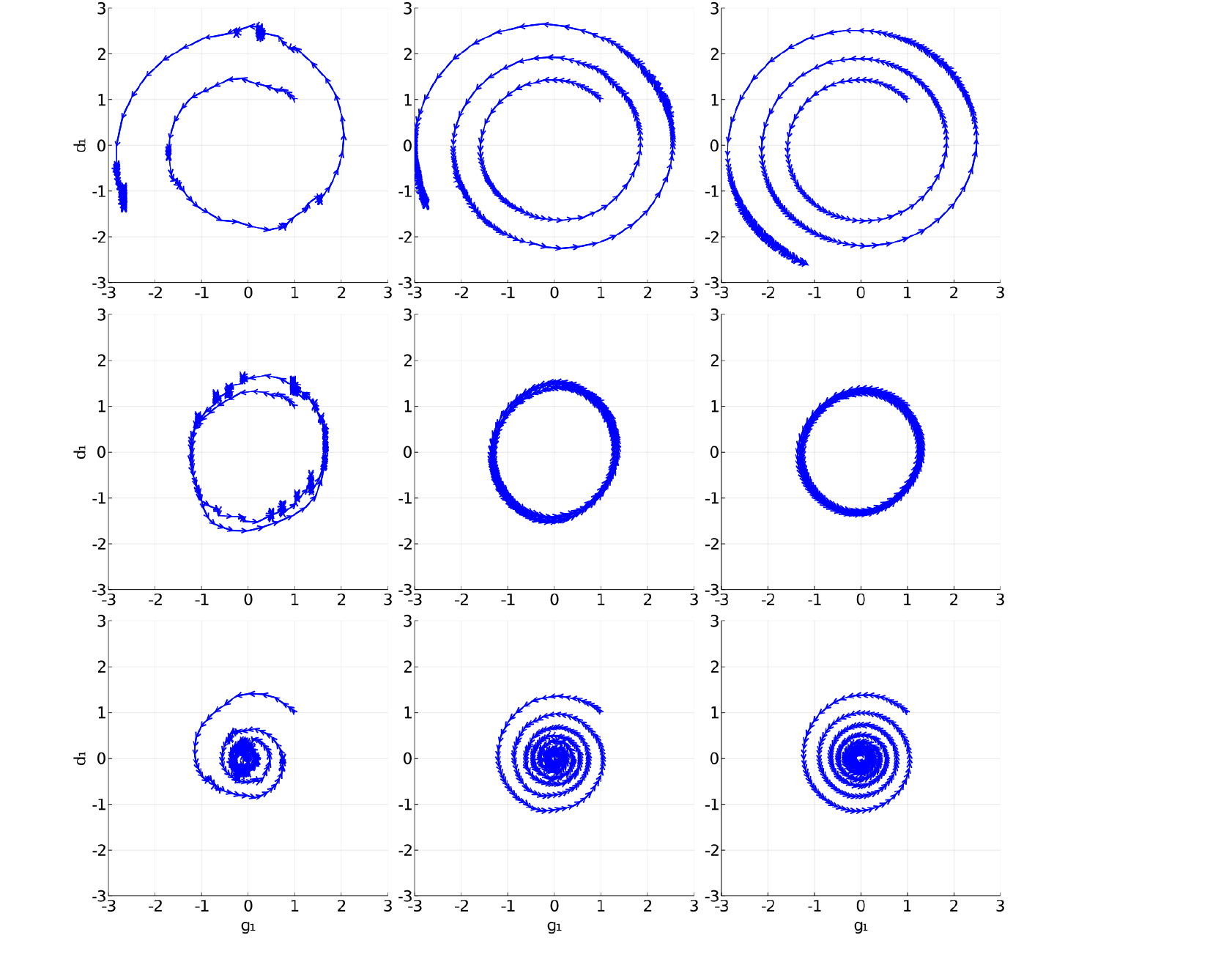
    \caption{Sample trajectories with batch sizes $1$, $10$, and $100$ for GDA, AGDA, and EGM on our simplified GAN over CIFAR data.}
    \label{fig:batch}
\end{figure*}

\section{Deferred Proofs} \label{sec:proofs}

\subsection{Proof of Proposition \ref{prop:ODEs}}

    We utilize Theorem 1 in \citet{Lu2020} and the notations therein to compute the $O(s)$-resolution ODE.
    
    For GDA and EGM, their $O(s)$-resolution ODEs are presented in Corollary 1 in \citet{Lu2020}.
    
    For AGDA, it follows by Taylor expansion of the update rule over stepsize $s$ that
    \begin{align}\label{eq:AGDA-expansion}
        z_{k+1}= z_k + s g_1(z_k) + s^2 g_2(z_k) + s^3 g_3(z_k)  + o(s^3)\ ,
    \end{align}
    where
    $ g_1(z_k)=-F(z_k)$,
    $g_2(z_k)=\vectorr{0}{\nabla_x F_y(z_k)F_x(z_k)}$, and $g_3(z_k)=\vectorr{0}{-\nabla_{xx}F_y(z_k)(F_x(z_k), F_x(z_k))}$
    with $\nabla_{xx}F_y(z_k)$ being a tensor. 
    Now suppose the $O(s)$-resolution ODE of AGDA is $\dz = f_0(z)+sf_1(z)$, then it follows by Theorem 1 in \citet{Lu2020} that
    $$f_0(z)=g_1(z)=-F(z)$$
    $$f_1(z)=g_2(z)-\frac{1}{2}\nabla f_0(z) f_0(z)=-\frac{1}{2}\twomatrix{A}{B^T}{B}{C} F(z) \ .$$
    Hence \eqref{eq:ODE-AGDA} is the $O(s)$-resolution ODE of AGDA. 

\subsection{Proof of Proposition \ref{thm:general_sufficient_condition}}

    We here claim the following fact, which directly proves the proposition:
    Suppose there exists positive definite matrices $\alpha>0$ such that
    \begin{equation}\label{eq:sufficient_condition}
    \frac{1}{2}\pran{\nabla G(z^*)^T P + P\nabla G(z^*)} \preceq -\alpha I \ , 
    \end{equation}
    Let $\uP=\lambda_{\max}(P)$ and $\lP=\lambda_{\min}(P)$ be the maximal and minimal eigenvalue of p.s.d. matrix $P$. Suppose $H>0$ satisfies that $\|\nabla^2 G(z)\|\le H$ for $z\in \{z|\|z-z^*\|_P\le 1\}$ 
    and the initial point $z(0)$ is close enough to $z^*$: $\|z(0) - z^*\|_P \leq \min\{\frac{\alpha \sqrt{\lP}}{H\uP}, 1\}\ ,$ 
    then the trajectory $z(t)$ monotonically converges to $z^*$ with
    \begin{equation}\label{eq:linear-decay}
        \|z(t) - z^*\|^2_P \leq e^{-{\alpha t/\uP}}\|z(0)-z^*\|^2_P.
    \end{equation}
    Consider the Lyapunov function $\frac{1}{2}\|z(t) - z^*\|^2_P$. This is monotonically decreasing as 
    \begin{align*}
        \frac{\partial}{\partial t} \frac{1}{2}\|z(t) - z^*\|^2_P &= (z(t)-z^*)^TP G(z(t))\\
        &\le (z(t)-z^*)^TP\nabla G(z^*)(z(t)-z^*) + \frac{H\uP}{2}\|z(t)-z^*\|^3\\
        &= \frac{1}{2} (z(t)-z^*)^T(\nabla G(z^*)^TP + P\nabla G(z^*))(z(t)-z^*) + \frac{H\uP}{2}\|z(t)-z^*\|^3\\
        &\leq -\alpha\|z(t)-z^*\|^2 + \frac{H\uP}{2\sqrt{\lP}}\|z(t)-z^*\|^2 \|z(t)-z^*\|_P \\
        & \le -\frac{\alpha}{2}\|z(t)-z^*\|^2 \\
        & \le -\frac{\alpha}{2\uP}\|z(t)-z^*\|^2_P \ ,
    \end{align*}
    where the first inequality utilizes $\|\nabla G(z)\|\le H$, the second inequality is from \eqref{eq:sufficient_condition} and $\|z(t)-z^*\|\le \frac{1}{\sqrt{\lP}}\|z(t)-z^*\|_P$, and the third inequality follows from the fact that $\|z(t)-z^*\|_P\le \frac{\alpha\sqrt{\lP}}{H\uP}$ by noticing $\|z(t)-z^*\|_P$ is monotonically non-increasing over $t$. \eqref{eq:linear-decay} follows immediately from the above inequality.

\subsection{Proof of Proposition \ref{prop:general_necessary_condition}}
By condition \eqref{eq:necessary}, we know that there exists some $\alpha>0$ such that
$$\lambda_{max}\left(\frac{1}{2}\pran{\nabla G(z^*)^T P + P\nabla G(z^*)}\right) \ge \alpha I\ .$$
Let $e$ to be the unit eigenvector of $\pran{\nabla G(z^*)^T P + P\nabla G(z^*)}$ that correspond to the max eigenvalue. For any $\delta>0$, consider taking $z(0)= z^*+\min\{\frac{\alpha}{H\uP}, \delta\} e$. Then it holds that 
\begin{align*}
        \frac{\partial}{\partial t} \frac{1}{2}\|z(0) - z^*\|^2_P &= (z(0)-z^*)^TP G(z(0))\\
        &\ge (z(0)-z^*)^TP\nabla G(z^*)(z(0)-z^*) - \frac{H\uP}{2}\|z(0)-z^*\|^3\\
        &= \frac{1}{2} (z(0)-z^*)^T(\nabla G(z^*)^TP + P\nabla G(z^*))(z(0)-z^*) - \frac{H\uP}{2}\|z(0)-z^*\|^2 \\
        &\ge \alpha\|z(0)-z^*\|^2 - \frac{H\uP}{2}\|z(0)-z^*\|^3 \\
        & \ge \frac{\alpha}{2}\|z(0)-z^*\|^2 \\
        & > 0 \ ,
    \end{align*}
    where the first inequality utilizes $\|\nabla G(z)\|\le H$, the second inequality is from the definition of $z(0)$, and the third inequality follows from the fact that $\|z(0)-z^*\|\le \frac{\alpha}{H\uP}$. Therefore, for a smaller enough $t$, it holds that $\|z(t) - z^*\|_P>\|z(0) - z^*\|_P$, which contradicts with the definition of begin a linear attractor.

\subsection{Proof of Theorem \ref{thm:main}}
 1. For Gradient Flow \eqref{eq:ODE-GF}, we take $P=I$. Notice that 
\begin{align*}
    \frac{1}{2}\pran{\nabla G(z^*)^T  + \nabla G(z^*)} = -\frac{1}{2}\pran{\nabla F(z^*)+ (\nabla F(z^*))^T} = \twomatrix{-A}{}{}{-C}\prec 0 \ ,
\end{align*}
where the inequality is due to \eqref{eq:sufficient_condition_GF}. Applying Propositions \ref{thm:general_sufficient_condition} and \ref{prop:general_necessary_condition} with $P=I$ finishes the proof.

2. For Gradient Descent Ascent \eqref{eq:ODE-GDA}, we take $P=I$. Notice that 
{\small
\begin{align*}
    & \frac{1}{2}\pran{\nabla G(z^*)^T  + \nabla G(z^*)} \\ 
    = & -\frac{1}{2}\pran{\nabla F(z^*)+ (\nabla F(z^*))^T} - \frac{s}{4} \pran{(\nabla F(z^*))^2 + ((\nabla F(z^*))^2)^T} - \frac{s}{4} \pran{\nabla^3 F(z^*) F(z^*) + (\nabla^3 F(z^*) F(z^*))^T} \\
    = & -\frac{1}{2}\pran{\nabla F(z^*)+ (\nabla F(z^*))^T} - \frac{s}{4} \pran{(\nabla F(z^*))^2 + ((\nabla F(z^*))^2)^T} \\
    = & \twomatrix{-A - \frac{s}{2} \pran{A^2-B^T B}}{}{}{-C - \frac{s}{2} \pran{C^2-B B^T}} \\
    \prec & 0 \ ,
\end{align*}
}
where the second equality is from $F(z^*)=0$, the third equality uses the definition of $\nabla F(z^*)$ by noticing $\nabla F(z^*)$ and $(\nabla F(z^*))^2$ are both generalized block skew-symmetric, the first inequality utilizes $\|A\|, \|-C\| \le \|F(z^*)\| \le 1/s $, and the last inequality is due to \eqref{eq:sufficient_condition_GDA}. Applying Propositions \ref{thm:general_sufficient_condition} and \ref{prop:general_necessary_condition} with $P=I$ finishes the proof.

3. Similar to 2., we have for EGM \eqref{eq:ODE-PPM} that
{\small
\begin{align*}
    & \frac{1}{2}\pran{\nabla G(z^*)^T  + \nabla G(z^*)} \\ 
    = & -\frac{1}{2}\pran{\nabla F(z^*)+ (\nabla F(z^*))^T} + \frac{s}{4} \pran{(\nabla F(z^*))^2 + ((\nabla F(z^*))^2)^T} + \frac{s}{4} \pran{\nabla^3 F(z^*) F(z^*) + (\nabla^3 F(z^*) F(z^*))^T} \\
    = & -\frac{1}{2}\pran{\nabla F(z^*)+ (\nabla F(z^*))^T} + \frac{s}{4} \pran{(\nabla F(z^*))^2 + ((\nabla F(z^*))^2)^T} \\
    = & \twomatrix{-A + \frac{s}{2} \pran{A^2-B^T B}}{}{}{-C + \frac{s}{2} \pran{C^2-B B^T}} \\
    \prec & 0 \ ,
\end{align*}
}
where the last inequality is due to \eqref{eq:sufficient_condition_PPM}. Applying Propositions \ref{thm:general_sufficient_condition} and \ref{prop:general_necessary_condition} with $P=I$ finishes the proof.

4.  For Alternating Gradient Descent Ascent \eqref{eq:ODE-AGDA}, we take $P=\twomatrix{I}{\frac{1}{2} sB^T}{\frac{1}{2} sB}{I}$, and consider $M=\frac{1}{2} \pran{P\nabla G(z^*) + \nabla G(z^*)^T P}$. Then $M$ is symmetric and
$$M_{xx}=-A-\frac{s}{2}A^2 - \frac{s^2}{4}\pran{A B^T B + B^T B A} ,$$
$$M_{yy}=-C-\frac{s}{2}C^2 - \frac{s^2}{4}\pran{C B B^T + B B^T C} ,$$
$$M_{xy}=-\frac{s}{2}\pran{BA+CB} - \frac{s^2}{4}\pran{BA^2 + C^2 B} . $$
Applying Propositions \ref{thm:general_sufficient_condition} and \ref{prop:general_necessary_condition} finishes the proof.

\subsection{Proof of Proposition \ref{prop:gradient-based}}

    Corollary 2 in \citet{Lu2020} shows that under the condition of Proposition \ref{prop:gradient-based}, the $O(s)$-resolution ODEs of EGM and GDA are gradient-based. We here just show that the $O(s)$-resolution ODE of AGDA is gradient-based. Recall the definition of $g_1,g_2,g_3$ in \eqref{eq:AGDA-expansion}. Notice that under the condition of the proposition, we have $\|g_j(z)\|\le O(\|F(z)\|)$ and $\|\nabla^k g_j(z)\| \le O(1)$ for $j=1,2,3$ and $k=1,...,5-j$. Thus it follows from Theorem 3 in \citet{Lu2020} that the $O(s)$-resolution ODE of AGDA is also gradient-based.

\subsection{Proof of Theorem \ref{thm:general-dta}}

    Suppose $\|z-z^*\|\le \delta:= \min\{\frac{bs\sqrt{\lP}}{H\uP},1\}$. Let $z(s)$ be the solution of the $O(s)$-resolution ODE at $t=s$ from initial solution $z(0)=z$ and $z^+=g(z,s)$. Then it follows by the proof of Proposition \ref{thm:general_sufficient_condition} that $\|z(s)-z^*\|_P\le e^{-\frac{1}{2} bs^2/\uP}\|z-z^*\|_P$. Moreover, it follows from the definition of gradient-based $O(s)$-resolution ODE that there exists $c>0$ such that $\|z(s)-z^+\|\le cs^3\|F(z)\|$. Now notice $F(z)$ is Lipschtiz continuous in the close and bounded set $B(z^*, \delta)$, i.e., there exists $\gamma>0$ such that $\|F(z)\|=\|F(z)-F(z^*)\|\le \gamma\|z-z^*\|$. Putting everything together and letting $s^*=\frac{b\sqrt{\lP}}{8c\gamma \uP^2}$, we have for any $s\le s^*$ that
    \begin{align*}
        &\|z^+-z^*\|_P\\
        & \le \|z(s)-z^*\|_P + \|z^+ - z(s)\|_P \\
        & \le e^{-\frac{1}{2}b s^2/\uP}\|z-z^*\|_P + \uP\|z^+ - z(s)\| \\
        & \le e^{-\frac{1}{2}b s^2/\uP}\|z-z^*\|_P + \uP cs^3\|F(z)\| \\
        & \le \pran{1-\frac{1}{4}b s^2/\uP}\|z-z^*\|_P + \uP cs^3 \gamma \|z-z^*\| \\
        & \le \pran{1-\frac{1}{4}b s^2/\uP}\|z-z^*\|_P + \frac{\uP}{\sqrt{\lP}} cs^3 \gamma \|z-z^*\|_P \\
        & \le \pran{1-\frac{1}{8}\frac{b s^2}{\uP}}\|z-z^*\|_P \ ,
    \end{align*}
    where the second inequality utilizes Proposition \ref{thm:general_sufficient_condition} and the definition of $\uP$, the third inequality utilizes \eqref{eq:gradient-based-ODE}, the fourth inequality utilizes $\|F(z)\|=\|F(z)-F(z^*)\|\le \gamma \|z-z^*\|$ and the last inequality is because $s\le s^*$.
    This proves the theorem by telescoping.

\subsection{Proof of Proposition \ref{prop:stationary}}

    For GDA, we have $G(z)=-\pran{I+\frac{s}{2}\nabla F(z)} F(z)$. Notice for small enough $s$, it holds that $\|\frac{s}{2}\nabla F(z)\|\le \frac{1}{2}$, whereby $I+\frac{s}{2}\nabla F(z)$ is a full-rank matrix. Therefore, $G(z)=0$ is equivalent to $F(z)=0$, which finishes the proof for GDA. The proof for AGDA and EGM follow the same process.

\subsection{Proof of Theorem~\ref{thm:gaussian}}
	The dynamics we are considering are given by
	$$ \dot z = -F(z) - \frac{s}{2}\nabla F(z) F(z) \ . $$
	The objective we are considering is
	$$ L(g,d) = \mathbb{E}_{s\sim N(0,I)} \left[\log \left(\frac{1}{1+e^{d^T\Sigma^{1/2}s}}\right) + \log\left(1-\frac{1}{1+e^{d^T(\Sigma'^{1/2}s+g)}}\right)\right] + \frac{\alpha}{2}\|g\|^2 - \frac{\alpha}{2}\|d\|^2 $$
	which has gradient given by 
	$$ F(g,d) = \begin{bmatrix} \nabla_g L(g,d) \\ -\nabla_d L(g,d)
	\end{bmatrix} = \begin{bmatrix} \mathbb{E}_{s\sim N(0,I)} \left[ \frac{1}{1+e^{d^T(\Sigma'^{1/2}s+g)}} d \right] + \alpha g \\ \mathbb{E}_{s\sim N(0,I)} \left[\frac{e^{d^T\Sigma^{1/2}s}}{1+e^{d^T\Sigma^{1/2}s}}\Sigma^{1/2}s -  \frac{1}{1+e^{d^T(\Sigma'^{1/2}s+g)}} (\Sigma'^{1/2}s+g) \right] + \alpha d 
	\end{bmatrix} \ . $$
	Evaluating this at $(g,d)=(0,0)$ verifies that the origin is stationary for any choice of $\alpha\in\RR$ as
	$$ F(0,0) = \begin{bmatrix}0 \\ \mathbb{E}_{s\sim N(0,I)} \left[\frac{e^{0}}{1+e^{0}}\Sigma^{1/2}s -  \frac{1}{1+e^{0}} \Sigma'^{1/2}s \right]
	\end{bmatrix} =  \begin{bmatrix} 0 \\ 0
	\end{bmatrix} $$
	where the last equality relies on our consideration of zero mean data distributions.
	Next we evaluate the Jacobian of $F(g,d)$ to understand the related continuous dynamics giving
	$$ \nabla_g F(g,d) = \begin{bmatrix} \mathbb{E}_{s\sim N(0,I)} \left[ \frac{-e^{d^T(\Sigma'^{1/2}s+g)}}{(1+e^{d^T(\Sigma'^{1/2}s+g)})^2} dd^T \right] + \alpha I \\ \mathbb{E}_{s\sim N(0,I)} \left[ \frac{e^{d^T(\Sigma'^{1/2}s+g)}}{(1+e^{d^T(\Sigma'^{1/2}s+g)})^2} (\Sigma'^{1/2}s+g)d^T - \frac{1}{1+e^{d^T(\Sigma'^{1/2}s+g)}} I \right] 
	\end{bmatrix} $$
	and
	$$ \nabla_d F(g,d) = \begin{bmatrix} \mathbb{E}_{s\sim N(0,I)} \left[ \frac{-e^{d^T(\Sigma'^{1/2}s+g)}}{(1+e^{d^T(\Sigma'^{1/2}s+g)})^2} d(\Sigma'^{1/2}s+g)^T + \frac{1}{1+e^{d^T(\Sigma'^{1/2}s+g)}} I \right]  \\ \mathbb{E}_{s\sim N(0,I)} \left[\frac{e^{d^T\Sigma^{1/2}s}}{(1+e^{d^T\Sigma^{1/2}s})^2}\Sigma^{1/2}ss^T\Sigma^{1/2} -\frac{e^{d^T(\Sigma'^{1/2}s+g)}}{(1+e^{d^T(\Sigma'^{1/2}s+g)})^2} (\Sigma'^{1/2}s+g)(\Sigma'^{1/2}s+g)^T \right] + \alpha I
	\end{bmatrix} $$
	Consequently, the Gradient Flow (GF) dynamics at the origin have
	$$ -\nabla F(0,0) = \begin{bmatrix} -\alpha I & \frac{1}{2} I \\ \frac{-1}{2} I & \frac{1}{4}(\Sigma -\Sigma')- \alpha I
	\end{bmatrix} \ . $$

	The dynamics of GDA are given by
	\begin{align*}
		\dot g = &\mathbb{E}_{s\sim N(0,I)} \left[ \frac{-1}{1+e^{d^T(\Sigma'^{1/2}s+g)}} d \right] - \alpha g\\
		&+\frac{s}{2}\Bigg(\left(\mathbb{E}_{s\sim N(0,I)} \left[ \frac{-e^{d^T(\Sigma'^{1/2}s+g)}}{(1+e^{d^T(\Sigma'^{1/2}s+g)})^2} dd^T \right] + \alpha I\right)\left(\mathbb{E}_{s\sim N(0,I)} \left[ \frac{-1}{1+e^{d^T(\Sigma'^{1/2}s+g)}} d \right] - \alpha g\right)\\
		& \hskip1.2cm +\left(\mathbb{E}_{s\sim N(0,I)} \left[ \frac{-e^{d^T(\Sigma'^{1/2}s+g)}}{(1+e^{d^T(\Sigma'^{1/2}s+g)})^2} d(\Sigma'^{1/2}s+g)^T + \frac{1}{1+e^{d^T(\Sigma'^{1/2}s+g)}} I \right]\right)\\
		&\hskip1.7cm \cdot \left(\mathbb{E}_{s\sim N(0,I)} \left[\frac{-e^{d^T\Sigma^{1/2}s}}{1+e^{d^T\Sigma^{1/2}s}}\Sigma^{1/2}s +  \frac{1}{1+e^{d^T(\Sigma'^{1/2}s+g)}} (\Sigma'^{1/2}s+g) \right] - \alpha d \right)\Bigg)
	\end{align*}
	and
	\begin{align*}
		\dot d = &\mathbb{E}_{s\sim N(0,I)} \left[\frac{-e^{d^T\Sigma^{1/2}s}}{1+e^{d^T\Sigma^{1/2}s}}\Sigma^{1/2}s +  \frac{1}{1+e^{d^T(\Sigma'^{1/2}s+g)}} (\Sigma'^{1/2}s+g) \right] - \alpha d \\
		&+\frac{s}{2}\Bigg(\left(\mathbb{E}_{s\sim N(0,I)} \left[ \frac{e^{d^T(\Sigma'^{1/2}s+g)}}{(1+e^{d^T(\Sigma'^{1/2}s+g)})^2} (\Sigma'^{1/2}s+g)d^T - \frac{1}{1+e^{d^T(\Sigma'^{1/2}s+g)}} I \right]\right)\\
		&\hskip1.7cm \cdot \left(\mathbb{E}_{s\sim N(0,I)} \left[ \frac{-1}{1+e^{d^T(\Sigma'^{1/2}s+g)}} d \right] - \alpha g\right)\\
		& \hskip1.2cm +\left(\mathbb{E}_{s\sim N(0,I)} \left[\frac{e^{d^T\Sigma^{1/2}s}}{(1+e^{d^T\Sigma^{1/2}s})^2}\Sigma^{1/2}ss^T\Sigma^{1/2} -\frac{e^{d^T(\Sigma'^{1/2}s+g)}}{(1+e^{d^T(\Sigma'^{1/2}s+g)})^2} (\Sigma'^{1/2}s+g)(\Sigma'^{1/2}s+g)^T \right] + \alpha I\right)\\
		&\hskip1.7cm \cdot \left(\mathbb{E}_{s\sim N(0,I)} \left[\frac{-e^{d^T\Sigma^{1/2}s}}{1+e^{d^T\Sigma^{1/2}s}}\Sigma^{1/2}s +  \frac{1}{1+e^{d^T(\Sigma'^{1/2}s+g)}} (\Sigma'^{1/2}s+g) \right] - \alpha d \right)\Bigg) \ .
	\end{align*}
	
	Taking the Jacobian of these more complicated GDA dynamics at $(g,d)=(0,0)$ provides an $O(s)$ correction to the GF dynamics above as the following
	$$  \nabla G(0,0) = \begin{bmatrix} -\left(\alpha + \frac{s}{2}\left(\alpha^2-\frac{1}{4}\right)\right) I &  \frac{1}{2} I +\frac{s}{16}\Sigma'  \\ \frac{-1}{2} I -\frac{s}{16}\Sigma' &  \frac{1}{4}(\Sigma -\Sigma')-\alpha I +\frac{s}{2}\left( \frac{1}{4}I +  (\frac{1}{4}(\Sigma -\Sigma')+\alpha I)(\frac{1}{4}(\Sigma -\Sigma')-\alpha I)\right)
\end{bmatrix} \ . $$
	
	Next we verify that the origin is not attractive for $\alpha=0$. This is immediate as the Jacobian above simplifies to the following very structured block-matrix
	$$  \nabla G(0,0) = \begin{bmatrix} \frac{s}{8} I &  \frac{1}{2} I +\frac{s}{16}\Sigma'  \\ \frac{-1}{2} I -\frac{s}{16}\Sigma' &  \frac{1}{4}(\Sigma -\Sigma') +\frac{s}{2}\left( \frac{1}{4}I +  \frac{1}{16}(\Sigma -\Sigma')^2\right)
\end{bmatrix} \ . $$
	Since we assume $\Sigma \succeq \Sigma'$, the diagonal blocks on this matrix are both positive definite and so this matrix has at least one eigenvalue with positive real part.
	
	Finally we verify that the origin transitions to being attractive for all sufficiently large $\alpha$. This follows from observing that the diagonal entries are all decreasing $O(-\alpha^2)$ while the off-diagonal entries have magnitude $O(\alpha)$. Then applying Gershgorin's circle theorem guarantees that once the diagonal is sufficiently negative, all of the eigenvalues of GDA's dynamics must be negative (and thus the origin becomes a linear attractor). 


\section{Calculation of First Lyapunov Coefficients} \label{app:computations}
We give a proposition providing formulas for $l_1(0)$ for symmetric bilinear problems.
Using this formula, we can compute the first Lyapunov coefficient for both polynomial examples considered in Section~\ref{sec:hopf}, verifying the phase transitions observed are supercritical and subcritical respectively.
\begin{lemma}\label{lem:hopf-formulas}
    Consider any symmetric and bilinear minimax problem $\min_{x\in \RR}\max_{y\in \RR} L(x,y,\alpha)= f(x)+\alpha xy-f(y)$ with a stationary point at $(x,y)=(0,0)$. Then the dynamics of GF~\eqref{eq:ODE-GF} at the origin have First Lyapunov Coefficient given by
    \begin{align*}
        w &= \alpha,\\
        l_1(0) &= -\frac{f^{(4)}(0)}{4\alpha}.
    \end{align*} 
    The dynamics of GDA~\eqref{eq:ODE-GDA} at the origin have First Lyapunov Coefficient given by
    \begin{align*}
        w &= \alpha(1+sf^{(2)}(0)),\\
        l_1(0) &= -\frac{f^{(4)}(0)}{4\alpha(1+sf^{(2)}(0))} -s\frac{4f^{(4)}(0)f^{(2)}(0)+3f^{(3)}(0)^2}{8\alpha(1+sf^{(2)}(0))} +\frac{2sf^{(3)}(0)^2+3s^2f^{(3)}(0)^2f^{(2)}(0)}{16\alpha(1+sf^{(2)}(0))^2}.
    \end{align*}
    The dynamics of EG~\eqref{eq:ODE-PPM} at the origin have First Lyapunov Coefficient given by
    \begin{align*}
        w &= \alpha(1-sf^{(2)}(0)),\\
        l_1(0) &= -\frac{f^{(4)}(0)}{4\alpha(1-sf^{(2)}(0))} +s\frac{4f^{(4)}(0)f^{(2)}(0)+3f^{(3)}(0)^2}{8\alpha(1-sf^{(2)}(0))}-\frac{2sf^{(3)}(0)^2-3s^2f^{(3)}(0)^2f^{(2)}(0)}{16\alpha(1-sf^{(2)}(0))^2}.
    \end{align*}
\end{lemma}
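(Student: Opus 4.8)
The plan is to specialize the general dynamics to the one-dimensional symmetric bilinear family and then brute-force the Lyapunov formula~\eqref{eq:lyapunov formula}. For the symmetric bilinear problem $L(x,y,\alpha)=f(x)+\alpha xy - f(y)$, we have $F(x,y)=(f'(x)+\alpha y,\ \alpha x - f'(y))$. First I would write out $\nabla F(z)$ explicitly as the $2\times 2$ matrix $\left[\begin{smallmatrix} f''(x) & \alpha \\ \alpha & -f''(y)\end{smallmatrix}\right]$, note that at the origin this is $\left[\begin{smallmatrix} f''(0) & \alpha \\ \alpha & -f''(0)\end{smallmatrix}\right]$, and then plug into each of the three ODEs from Proposition~\ref{prop:ODEs}. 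For GF, $\dot z = -F(z)$; for GDA, $\dot z = -F(z)-\tfrac{s}{2}\nabla F(z)F(z)$; for EGM, $\dot z = -F(z)+\tfrac{s}{2}\nabla F(z)F(z)$. In each case I would compute the linearization at the origin, read off the imaginary part of its eigenvalues to identify $w$ (this gives $w=\alpha$ for GF, $w=\alpha(1+sf''(0))$ for GDA, $w=\alpha(1-sf''(0))$ for EGM — the $\pm s$ correction simply flipping sign between GDA and EGM because of the $\mp\tfrac{s}{2}\nabla F F$ term), and then expand the nonlinear remainder $G(z)-\nabla G(0)z$ to third order in $(x,y)$ to extract the coefficients $P_{xx},P_{yy},P_{xy},P_{xxx},\dots$ and their $Q$-counterparts.

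The main bookkeeping step is that~\eqref{eq:diag-form} requires the linear part to be \emph{exactly} $\left[\begin{smallmatrix}0 & -w\\ w & 0\end{smallmatrix}\right]$, whereas the raw linearization of GF/GDA/EGM at the origin will be some matrix with zero trace (by the symmetry $f(x)-f(y)$) but not in this canonical anti-diagonal form — it will look like $\left[\begin{smallmatrix} a & b \\ c & -a\end{smallmatrix}\right]$ with $a^2+bc<0$. So I would first apply a linear change of coordinates bringing this to the normal form~\eqref{eq:diag-form}; the symmetry of the problem should make this change of variables fairly clean (essentially a rotation-and-scaling), and crucially the Lyapunov coefficient $l_1(0)$ is invariant under such coordinate changes, so one is free to compute it in whichever representation is most convenient. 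I expect that carrying the $O(s)$ and $O(s^2)$ terms through the change of variables and then through the two-line formula~\eqref{eq:lyapunov formula} is where all the actual labor lies. The Taylor coefficients needed are: $P_{xx},P_{yy},P_{xy}$ come from second derivatives of $f$ (i.e.\ $f''(0)$, $f'''(0)$) fed through the $\nabla F(z)F(z)$ term for GDA/EGM (for GF the quadratic part of the remainder is just $\tfrac12 f'''(0)x^2$ type terms), and $P_{xxx}$ etc.\ come from $f^{(4)}(0)$ directly plus $s$-corrections involving products like $f^{(4)}(0)f''(0)$ and $f'''(0)^2$.

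The hard part will be organizing the expansion so that the final expression collapses to the stated closed forms rather than a sprawling mess — in particular matching the denominators $\alpha(1\pm sf''(0))$ and $\alpha(1\pm sf''(0))^2$, which arise because $w$ itself carries the $s$-correction and appears in the denominators of~\eqref{eq:lyapunov formula} as $\tfrac1{8w}$ and $\tfrac1{8w^2}$. A useful sanity check at the end: setting $s=0$ in the GDA and EGM formulas must both reduce to the GF formula $l_1(0)=-f^{(4)}(0)/(4\alpha)$, and the GDA and EGM formulas should be related by $s\mapsto -s$, which they manifestly are term-by-term in the claimed expressions; I would use these two checks to catch sign errors. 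Once the formulas are verified, applying them to the concrete polynomials $f(x)=(x+3)(x+1)(x-1)(x-3)$ and $g(x)=-f(x)$ with the stated $\alpha^*=\sqrt{20400}$ is a direct numerical substitution, yielding $w\approx 148.542$ and $l_1(0)\approx \mp 0.04362$.
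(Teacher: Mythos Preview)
Your overall strategy is right, but you have the structure of the linearization backwards, and this leads you to plan an unnecessary coordinate change that the paper avoids entirely.

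You claim the raw linearization at the origin looks like $\left[\begin{smallmatrix}a & b\\ c & -a\end{smallmatrix}\right]$ with zero trace ``by the symmetry $f(x)-f(y)$''. That is not what the symmetry gives. For each of GF, GDA, EGM, the map $(x,y)\mapsto(y,x)$ together with $L(x,y)=-L(y,x)$ forces the two diagonal entries of $\nabla G(0)$ to be \emph{equal}, not opposite. For instance, for GF one computes $\nabla G(0)=\left[\begin{smallmatrix}-f''(0) & -\alpha\\ \alpha & -f''(0)\end{smallmatrix}\right]$, and for GDA the diagonal entries are both $-f''(0)-\tfrac{s}{2}(f''(0)^2-\alpha^2)$. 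The trace is therefore $2a$, not zero; it vanishes exactly at the critical $\alpha^*$ where the Hopf bifurcation occurs. The paper's one-line observation is that, at that critical value, zero trace plus equal diagonals forces both diagonals to be zero, so the system is \emph{already} in the canonical form~\eqref{eq:diag-form} with $w$ equal to the off-diagonal entry. No change of coordinates is needed.

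With that in hand, the paper simply writes out the two scalar components of each ODE, reads off $w=Q_x$, the second partials $P_{xx},P_{yy},Q_{xx},Q_{yy}$ (the mixed partials $P_{xy},Q_{xy}$ vanish), and the third partials $P_{xxx},Q_{yyy}$ (with $P_{xyy}=Q_{xxy}=0$), and plugs directly into~\eqref{eq:lyapunov formula}. Your planned coordinate change would still give the right answer since $l_1(0)$ is invariant, but it is extra labor that the symmetry makes unnecessary; the ``hard part'' you anticipate does not actually occur.
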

\begin{proof}
    First, we observe that for any problem with $L(x,y,0)=f(x)+axy-f(y)$, each of our algorithms give dynamics of the type~\eqref{eq:diag-form}. The symmetry between $x$ and $y$ in the objective function and in EGM and GDA ensures that whenever the trace of $\nabla G(z)$ is zero, the two diagonal entries of $\nabla G(z)$ must be zero.

    The dynamics of GF~\eqref{eq:ODE-GF} for this class of symmetric problems are given by
    \begin{equation*}
        \begin{bmatrix} \dot x \\ \dot y \end{bmatrix} = \begin{bmatrix} -f'(x) - \alpha y \\ \alpha x-f'(y) \end{bmatrix}
    \end{equation*}   
    From this, the following derivatives in terms of the form~\eqref{eq:diag-form} are given by
    \begin{align*}
        w = Q_x &= \alpha, \\
        P_{xx}=Q_{yy} &=-f^{(3)}(0), \\
        P_{xy}=P_{yy}=Q_{xy}=Q_{xx} &=0, \\
        P_{xxx}=Q_{yyy} &=-f^{(4)}(0), \\
        P_{xyy}=Q_{xxy} &=0.
    \end{align*}
    Then the claimed formula follows from~\eqref{eq:lyapunov formula}.

    The dynamics of GDA~\eqref{eq:ODE-GDA} for this class of symmetric problems are given by
    \begin{equation*}
        \begin{bmatrix} \dot x \\ \dot y \end{bmatrix} = \begin{bmatrix} -f'(x) - \alpha y \\ \alpha x-f'(y) \end{bmatrix} - \frac{s}{2}\begin{bmatrix} f^{(2)}(x)(f'(x) + \alpha y) -\alpha(\alpha x-f'(y)) \\ \alpha(f'(x)+\alpha y) + f^{(2)}(y)(\alpha x-f'(y)) \end{bmatrix}
    \end{equation*}   
    From this, the following derivatives in terms of the form~\eqref{eq:diag-form} are given by
    \begin{align*}
        w = Q_x &= a + s\alpha f^{(2)}(0), \\
        P_{xx}=Q_{yy} &=-f^{(3)}(0) - \frac{3sf^{(3)}(0)f^{(2)}(0)}{2}, \\
        P_{yy}=-Q_{xx} &= \frac{-s\alpha f^{(3)}(0)}{2}, \\
        P_{xy}=Q_{xy} &=0, \\
        P_{xxx}=Q_{yyy} &=-f^{(4)}(0) - \frac{s}{2}\left(4f^{(4)}(0)f^{(2)}(0)+3f^{(3)}(0)^2\right), \\
        P_{xyy}=Q_{xxy} &=0.
    \end{align*}
    Then the claimed formula follows from~\eqref{eq:lyapunov formula}.
    
    The dynamics of EGM~\eqref{eq:ODE-PPM} for this class of symmetric problems are given by
    \begin{equation*}
        \begin{bmatrix} \dot x \\ \dot y \end{bmatrix} = \begin{bmatrix} -f'(x) - \alpha y \\ \alpha x-f'(y) \end{bmatrix} + \frac{s}{2}\begin{bmatrix} f^{(2)}(x)(f'(x) + \alpha y) -\alpha(\alpha x-f'(y)) \\ \alpha(f'(x)+\alpha y) + f^{(2)}(y)(\alpha x-f'(y)) \end{bmatrix}
    \end{equation*}   
    From this, the following derivatives in terms of the form~\eqref{eq:diag-form} are given by
    \begin{align*}
        w = Q_x &= a - s\alpha f^{(2)}(0), \\
        P_{xx}=Q_{yy} &=-f^{(3)}(0) + \frac{3sf^{(3)}(0)f^{(2)}(0)}{2}, \\
        P_{yy}=-Q_{xx} &= \frac{s\alpha f^{(3)}(0)}{2}, \\
        P_{xy}=Q_{xy} &=0, \\
        P_{xxx}=Q_{yyy} &=-f^{(4)}(0) + \frac{s}{2}\left(4f^{(4)}(0)f^{(2)}(0)+3f^{(3)}(0)^2\right), \\
        P_{xyy}=Q_{xxy} &=0. 
    \end{align*}
    Then the claimed formula follows from~\eqref{eq:lyapunov formula}.
\end{proof}

For the example~\eqref{eq:W}, the function $ f(x) = (x+3)(x+1)(x-1)(x-3) $ has
\begin{align*}
    f^{(4)}(0)&=24\\
    f^{(3)}(0)&=0\\
    f^{(2)}(0)&=-20\\
    f^{(1)}(0)&=0.
\end{align*}
Plugging this into Lemma~\ref{lem:hopf-formulas} yields the EGM ODE dynamics at $\alpha^*=\sqrt{20400}$ having
\begin{align*}
        w &= \alpha(1-sf^{(2)}(0))=\sqrt{20400}(1-0.002\times-20) \approx 148.542,\\
        l_1(0) &=-\frac{f^{(4)}(0)}{4\alpha(1-sf^{(2)}(0))} +s\frac{4f^{(4)}(0)f^{(2)}(0)+3f^{(3)}(0)^2}{8\alpha(1-sf^{(2)}(0))}-\frac{2sf^{(3)}(0)^2-3s^2f^{(3)}(0)^2f^{(2)}(0)}{16\alpha(1-sf^{(2)}(0))^2} \approx -0.04362.
\end{align*}
Likewise, for~\eqref{eq:M}, the function $ g(x) = -(x+3)(x+1)(x-1)(x-3) $ has
\begin{align*}
    f^{(4)}(0)&=-24\\
    f^{(3)}(0)&=0\\
    f^{(2)}(0)&=20\\
    f^{(1)}(0)&=0.
\end{align*}
Plugging this into Lemma~\ref{lem:hopf-formulas} yields the GDA ODE dynamics at $\alpha^*=\sqrt{20400}$ having
\begin{align*}
        w &= \alpha(1+sf^{(2)}(0))=\sqrt{20400}(1+0.002\times20) \approx 148.542,\\
        l_1(0) &=-\frac{f^{(4)}(0)}{4\alpha(1+sf^{(2)}(0))} -s\frac{4f^{(4)}(0)f^{(2)}(0)+3f^{(3)}(0)^2}{8\alpha(1+sf^{(2)}(0))} +\frac{2sf^{(3)}(0)^2+3s^2f^{(3)}(0)^2f^{(2)}(0)}{16\alpha(1+sf^{(2)}(0))^2} \approx 0.04362.
\end{align*}

\end{document}